%
%
%
%
\documentclass[12pt, reqno]{amsart}


\usepackage{graphicx}

\usepackage{amssymb,amsmath,amsthm}
\usepackage{esint}
\usepackage{color}
\usepackage[left=1.5cm, right=1.5cm, top=2cm, bottom=2cm]{geometry}

\theoremstyle{definition}

\newtheorem{theorem}[equation]{Theorem}
\newtheorem{lemma}[equation]{Lemma}
\newtheorem{corollary}[equation]{Corollary}

\newtheorem{definition}[equation]{Definition}

\newtheorem{remark}[equation]{Remark}
\newtheorem{proposition}[equation]{Proposition}

\usepackage{graphicx}

\newcommand{\norm}[1]{{\left \Vert{#1}\right \Vert}}

\usepackage{amsmath,amsthm,amssymb,enumerate, fancyhdr}
\usepackage[english]{babel}
\usepackage[backref=page]{hyperref}

\usepackage{thmtools}

\newcommand{\setR}{\mathbb{R}}
\newcommand{\N}{\mathbb{N}}
\newcommand{\divergence}{\mathrm{div}}
\newtheorem*{theorem*}{Theorem}

\newcommand{\bfq}{\mathbf{q}}

\newcommand{\Fcal}{\mathcal{F}}

\DeclareMathOperator{\supp}{supp}
\DeclareMathOperator{\cof}{cof}

\newcommand{\abs}[1]{|#1|}

\newcommand{\absB}[1]{\Bigl|#1\Bigr|}
\newcommand{\absBB}[1]{\biggl|#1\biggr|}

\newcommand{\toweakstar}{\overset{*}\rightharpoonup}


\providecommand{\skp}[1]{\langle{#1}\rangle}
\newcommand{\nablasym}{{\nabla_s}}

\DeclareMathOperator{\divg}{div}

\DeclareMathOperator{\test}{\Fcal_{\eta}}
\DeclareMathOperator{\testn}{\Fcal_{\eta_{n}}}
\DeclareMathOperator{\testd}{\Fcal_{\delta}}

\newcommand{\bn}{{\nu}}
\newcommand{\bfp}{\mathbf{p}}


\newcommand{\R}{\mathbb{R}}

\newcommand{\bu}{\mathbf{u}}

\newcommand{\by}{{y}}

\numberwithin{equation}{section}



\usepackage{graphicx}
\usepackage{tikz}

\begin{document}

\title[Time-periodic weak solutions for FSI]{Time-periodic weak solutions for the interaction of an incompressible fluid with a linear Koiter type shell under dynamic pressure boundary conditions}

\author{Claudiu M\^{i}ndril\u{a}}
\address{Department of Analysis, Faculty of Mathematics and Physics, Charles University,
Sokolovsk\'{a} 83, 18675, Prague, Czech Republic }
\email{mindrila@karlin.mff.cuni.cz}

\author{Sebastian Schwarzacher}
\address{Department of Mathematics, Uppsala University, Box 480
    751 06 Uppsala \& Department of Analysis, Faculty of Mathematics and Physics, Charles University,
Sokolovsk\'{a} 83, 18675, Prague, Czech Republic }
\email{schwarz@karlin.mff.cuni.cz}


\date{\today}



\begin{abstract}
In many occurrences of fluid-structure interaction time-periodic motions are observed. We consider the interaction between a fluid driven by the three dimensional Navier-Stokes equation and a two dimensional linearized elastic Koiter shell situated at the boundary. The fluid-domain is a part of the solution and as such it is changing in time periodically. On a steady part of the boundary we allow for the physically relevant case of dynamic pressure boundary values, relevant to model inflow/outflow. We provide the existence of at least one weak time-periodic solution for given periodic external forces that are not too large. For that we introduce new approximation techniques and a-priori estimates.
\end{abstract}


\maketitle
\noindent\textsc{Keywords:} Navier-Stokes equations,  Periodic solutions, Fluid-structure interaction, Koiter shell

\section{Introduction}

Periodic motions are often observed when fluids and solids are interacting. This phenomenon ranges from large motions as the rotation of wind-wheels to flutter. It includes important applications such as heart-beat driven blood flow through vessels or oscillations of bridge decks~\cite{bonheure2019periodic, pironneau1994optimal,Quarteroni2000,FSIforBIO,C21} and important benchmarks from numerics as in~\cite{turek.s.hron.j:numerical}. In this work some answers to the related relevant question {\em under what conditions time-periodic motions may occur mathematically} are given.

The interaction between fluids and solids is an intensive domain of research. There is a vast list of results in many scientific areas. While most results are in the applied sciences including applied mathematics more and more pure mathematical results are being  proved. For an overview on the analytical results and related applications for the set-up considered here, that is the setting of a moving shell constituting a part of the boundary of the fluid with which it is in interaction, we refer to the excellent survey \cite{C21} and the references therein.
As can be seen there, the focus in the study of {\em weak solutions} is on the {\em existence of solutions for the Cauchy problem}, that is solutions whose initial values are given. We mention here the pioneering results obtained by Grandmont et. al.~\cite{Gr05,Gr08} for the case of a plate, and by Lengeler and R\r{u}{\v{z}}i{\v{c}}ka in \cite{LR14} for the case of a linear shell. Further extensions were obtained in \cite{MS22}  where the Cauchy problem for the \emph{nonlinear Koiter energy} was proved, along some additional regularity estimates. For the initial value problem further extension are compressible or heat-conducting fluids
\cite{BreSch18,breit-schwarz-fourier} and the {\em constructive} existence results by means of Arbitrary-Lagrangian-Eulerian methods obtained by Muha and Cani{\'c} in \cite{muha-canic-arma-13,muha-canic-noslip}. There dynamic pressure conditions where investigated that are prominent for blood-vessel simulations.  See also \cite{KamSchSpe23} where deformations in all coordinate directions were considered.

In comparison to the Cauchy problem, much less research was devoted to time-periodic problems. Even so time-periodic solutions are very natural to appear and are related to some of the most relevant phenomena, like  turbulence or bifurcation, see \cite{yang2020periodically,galdi2016bifurcating}.

One of the first results in this regime was obtained by the authors of this paper~\cite{our-paper}. Please see there other references for previous results on time-periodic solutions for strong solutions, fixed fluid-domain or most significantly rigid body motions~\cite{galdi2006existence,galdi2020viscous}.
Very recently, Kreml et. al. obtained in \cite{kreml2023time}  the existence of time-periodic weak solutions for a 2D compressible fluid which interacts with an elastic beam. In this work we continue the study on {\em time-periodic solutions for fluid-structure interactions} and extend to to the regime of {\em elastic shells}. For the fluid we consider the {\em unsteady three dimensional Navier-Stokes equation} while the shell is moving in a prescribed direction along a potentially {\em curved} reference geometry. We consider a linearized Koiter energy type, rigorously justified in \cite{Ci05}. Hence the evolution of the solid is {\em hyperbolic and linear}. The shell is situated on a part of the fluid-boundary, which is changing in time accordingly. On a steady part of the fluid-boundary we consider pressure boundary values. Our main result is Theorem~\ref{thm:main}, where the existence of coupled time-periodic weak solutions to such interactions (namely~\eqref{eqn:system}) is shown for all time-periodic forces, whose size is not larger then a constant essentially depending on the curvature of the reference geometry in relation to the stiffness of the elastic shell. See Figure~\ref{figure} for a sketch of the geometric setup. 
The current work is influenced by the strategy developed by the authors in~\cite{our-paper}. The methodology developed here allows to extends the state of the art in two ways.
\begin{itemize}
\item The main emphasize of the present paper is to show the existence of time-periodic motions for {\em elastic shells}. This leads to the most dramatic differences in the analysis in contrast to flat elastic plates considered in~\cite{our-paper}. Indeed, in~\cite{our-paper} the motion of the shell was restricted to a fixed coordinate axis. In the present paper curved reference configurations are allowed and the motion of the shell is then along the non-trivial normal vector-bundle. Curved reference configurations are natural to be considered, for instance in case a blood vessel or a gas balloon is modeled. Considering curved reference conditions makes the analysis much more difficult as we explain below in Subsection~\ref{sec:math}. This can already be seen in case of the Cauchy problem, where it took almost ten years to generalize the theory from elastic plates to elastic shells~\cite{Gr05,LR14}.
Similar to the Cauchy problem in order to exclude {\em topological changes} further dependence of the principle curvatures of the reference configurations in relation to the size of the admissible forces need to be imposed. These dependencies are here however quantified in a controlled manner and vanish in the flat plate case.
\item In this paper we treat inflow/outflow boundary conditions, for which time-periodic solutions have not been shown previously; even so they allow to simulate blood-flow driven by the periodic force of a heart beat. What we show is that this quite prominent setting for applications (see ~\cite{C21}) seems to be very appropriate to expect the appearance of time-periodic motions. In our previous work~\cite{our-paper}, we considered everywhere no-slip boundary conditions. Every no-slip boundary conditions implies that the volume of the fluid is conserved which implies non-local motion effects of the shell. See Remark~\ref{rem:boundary} on further discussions on that issue.
\end{itemize}
The crucial technical reason for why there are much more existence results for the Cauchy problem in relation to time-periodic solutions lies in the fact that the so-called {\em energy estimates} are very different.
Indeed, as in the time periodic setting initial and end value cancel the {\em energy estimates} are here {\em significantly weaker}. In case the structure is not viscous, this means essentially no estimate for it by the {\em energy inequality} that is not coming from the fluid. Hence, a second {\em regularity estimate} for the structure is unavoidable. By this we mean an estimate for the elastic energy of the solid, see~\eqref{eqn:main-energy-estimate}. This was already realized in \cite{our-paper}. The technical innovation in the present paper however lies directly in the construction of the solution. Already the Galerkin basis (commonly the first step to find a solution) is constructed along the {\em extra regularity estimates} that are unavoidable in the time-periodic setting. Technically this means that the deformation itself is a test-function on the discrete level (see \eqref{eqn:sup-E-n-t}).
This construction allows to perform all necessary estimates in the very first approximation step.

We believe this innovation to be rather suitable for further applications. 
A few are summarized in several remarks that follow Theorem~\ref{thm:main}, where also limitations of the approach are discussed. This includes the treatment of further boundary conditions (see Remark~\ref{rem:boundary} and Remark~\ref{rem:noslip}).  For more information on the technical innovations see Subsection~\ref{sec:math}

\subsection{The model}
We introduce here the geometric setting, the elastic model, the fluid model and the coupled partial differential equations.
\subsubsection*{The geometric setting}
We define $\Omega\subset \mathbb{R}^3$ as the reference domain for the fluid, which is assumed to be open, bounded, connected  with  a $C^3$-boundary. We shall denote the tangential unit vectors by $\tau_1,\tau_2$ and the  outer normal by $\nu$. 
The boundary of $\Omega$ consists of three parts. First, $M$ the time-changing part of the boundary that is determined by the motion of the shell. We assume that this part is a single smooth connected component. The shell is assumed to be clamped at its endpoints. Hence this part of the boundary is naturally parametrized through the deformation of the structure. 
Second, $\Gamma_p$ the inflow/outflow part of the boundary. 
It is contained in the steady part of $\partial \Omega$. It is assumed to contain a finite number of smooth components. 
Finally, the reminder of the boundary is the Dirichlet part $\Gamma_D$ on which (for the sake of simplicity) we assume homogeneous no-slip boundary conditions $\mathbf{u}=\mathbf{0}$.\footnote{An inflow in terms of a non-zero Dirichlet boundary value would be meaningful. The present method seems to be suitable to allow for inhomogenuous periodic boundary values as well. Restrictions on the regularity or size of the boundary values should be expected.} 
Both Dirichlet and inflow/outflow boundary parts are steady in time, while the moving part of the boundary changes {\em via the solid deformation that is assumed to move in direction of the fixed direction of the outer normal of the domain}. In particular, this model reduction implies that the velocity in tangential direction of the fluid is assumed to be zero along the shell. This reduction is common in this setting,  see also~\cite{Ci05}. Indeed, the theory of weak-solution for shells that are modeled to move freely in all coordinate directions has only recently been initiated~\cite{KamSchSpe23}. 
The situation is depicted in the figure~\ref{figure}.
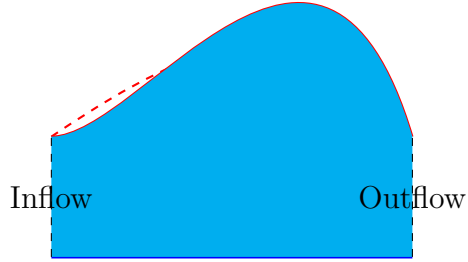
\begin{figure}\label{figure}
\caption{The domain $\Omega_{\eta(t)}$}
\begin{tikzpicture}[scale=0.8]
\draw[thick,red]   (8, 2) .. controls (9.5, 2) and (12.5, 7) .. (14, 2);
\draw[thick,dashed,red]   (8, 2) .. controls (9.5, 3) and (12.5, 5) .. (14, 2);
\draw[very thick, blue](8,0)--(14,0);
\path node at (11,1) {\textcolor{blue}{$\Omega_\eta$}};
\path node at (11,3.2) {\textcolor{red}{\bf $M$}};
\draw[->,thick, red] (12.2,3.47)--(12.4,4.2);
\path node at (12.7,3.7) {\textcolor{red}{\bf $\eta\nu$}};
\fill[color=cyan,opacity=.1]  (8, 2) .. controls (9.5, 2) and (12.5, 7) .. (14, 2) -- (14,0) -- (8,0) -- (8,2);
\draw[dashed](8, 0) -- (8, 2);
\draw[dashed](14, 0) -- (14, 2);
\path node at (8,1) {\textcolor{black}{Inflow}};
\path node at (14,1) {\textcolor{black}{Outflow}};
\end{tikzpicture}
\end{figure}

Concerning the moving part $M$, we assume that it is parametrized by  a map $\phi$ and an open set $\omega \subset  \mathbb{R}^{2}$ via 
\begin{equation}
M=\phi\left(\omega\right),\ \phi\in C^{3}\left(\omega;\mathbb{R}^{3}\right),\ \phi\ \text{injective}
\end{equation}
This enables us to define the tangent vectors $\partial_{1} \phi (y)$ and $\partial_{2} \phi (y)$ at each point of $\phi(y)$, $y \in \omega$ and the unit normal vector field as 
\begin{equation}
\nu : \omega \mapsto S^{2}, \  \nu\left(y\right):=\frac{\partial_{1}\phi\left(y\right)\times\partial_{2}\phi\left(y\right)}{\left|\partial_{1}\phi\left(y\right)\times\partial_{2}\phi\left(y\right)\right|}, \quad y\in \omega.
\end{equation}

Let us now define, for $x$ in a neighbourhood of $\partial\Omega$, the quantities 
\[y\left(x\right):=\text{arg}\min_{y\in\omega}\left|x-\phi\left(y\right)\right|
\]
$s(x)$ which fulfills the relation
\begin{equation}
s\left(x\right)\nu\left(y\left(x\right)\right)+y\left(x\right)=x
\end{equation}

and 
\begin{equation}
\mathbf{p}\left(x\right):=\phi\left(y\left(x\right)\right).
\end{equation}
One easily notices that 
\[\left|s\left(x\right)\right|=\min_{y\in\omega}\left|x-\phi\left(y\right)\right|
\]
Let us consider, among the sets 
\[S_{l}:=\left\{ x\in\mathbb{R}^{3}:\text{dist}\left(x,\partial\Omega\right)<l\right\} ,\quad l>0
\]
where $y,  \ s, \ \mathbf{p}$ are defined, the one which is maximal and denote it by $S_{L_0}$. This means 
\[S_{L_0}=\left\{ x\in \R^3\,:\, s(x)\leq L_0,\right\}. 
\]
On the moving part we attach an \emph{elastic shell} which moves only in normal direction and 
 whose displacement field  is  $\eta \nu $.
In the following we fix $L<L_0$ and only consider deformations 
\[
\eta \in C^0_0( \omega,[-L,L]),
\]
which implies that no touching of the moving domain and hence no topological change of the moving domain is possible.
The mapping
\begin{equation}\label{eqn:phi-eta-t}
\phi_{\eta}:\omega\mapsto\phi_{\eta}\left(\omega\right),\quad \phi_{\eta}:x\mapsto \phi\left(x\right)+\eta\left(t,x\right)\nu(\phi\left(x\right))
\end{equation} 
defines the parametrization of the moving part of the boundary $\partial\Omega_\eta$. 
In order to extend it to a parametrisation of the whole domain we introduce
\[
\psi_\eta :\partial\Omega \to \partial \Omega_\eta,\quad \bfp\mapsto\begin{cases}
    \phi_\eta\circ\phi^{-1}(\bfp)\text{ for }\bfp\in M
    \\
    \bfp\text{ otherwise.}
\end{cases}
\]
Observe that $\phi_{\eta}$ and $\psi_\eta$
are homeomorphism and even a $C^{k}$ diffeomorphism if $\eta \in C^{k}_0(\omega,[-L,L])$ and $\phi^{-1}\in C^{k}(M)$ for $k \in \mathbb{N}$.

Let $I=[0,T]$ be a time-interval. 
We now  introduce  the  \emph{deformed time-space cylinders} for times $t\in I$ and deformations $$\eta : I \times \omega \mapsto \mathbb{R}$$ 
 throughout \[
I\times\Omega_{\eta}:=\bigcup_{t\in I}\left\{ t\right\} \times\Omega_{\eta\left(t\right)}.
\]

The mapping $\psi_{\eta(t)}$ can be extended to a homeomorphism
\begin{equation}\label{eqn:psi-eta-t}
\psi_{\eta\left(t\right)}:\Omega\mapsto\Omega_{\eta\left(t\right)},\quad t\in I
\end{equation}
and defines a diffeomorphism (whose regularity depends on $\eta$). For details please see \cite{MS22,LR14}.
Note that, in particular, self-penetrations are excluded whenever $\norm{\eta}_{L^{\infty}_{t,x}}\leq L<L_0$.

\subsubsection*{The Koiter elastic energy}\label{subsection:Koiter}
Following \cite[Theorem 4.2.1]{Ci05}, we introduce the  
%
%
\emph{linearized Koiter energy} for $h$ the thickness of the plate $\eta$, with $\mathbb{G}(\eta)$ the linearised metric tensor and $\mathbb{R}_{ij}^{\sharp}$ as the linearised curvature tensor:
\begin{equation}\label{eqn:Koiter-energy}
K\left(\eta\right):=\sum_{i,j,m,l=1}^{2}\frac{h}{2}\int_{\omega}\mathbb{A}^{ij, ml}\mathbb{G}_{ml}\left(\eta\right)\mathbb{G}_{ij}\left(\eta\right)dA+\frac{h^{3}}{6}\int_{\omega}\mathbb{A}^{ij, ml}\mathbb{R}^{\sharp}_{ml}\left(\eta\right)\mathbb{R}^{\sharp}_{ij}\left(\eta\right)dA.
\end{equation}
Here $\mathbb{A}$ is a fourth-order tensor known as \emph{the elasticity (or stiffness) tensor} defined through
\begin{equation}\label{eqn:lame}
\mathbb{A}^{ij,ml}=\frac{4\lambda \mu}{\lambda+2\mu}a^{ij}a^{ml}+4\mu (a^{im}a^{jl}+a^{il}a^{jm})
\end{equation}
 and $\lambda,  \mu$ are the Lam\'{e} coefficients and $a^{ij}$ is the contravariant metric tensor associated to $M$.
In \cite[Theorem 4.4.2]{Ci05} it is shown that, in fact, that the $L^2$ gradient of $K$ takes the form 
\[
K^{\prime}\left(\eta\right)=m\Delta^{2}\eta+B\eta.
\]
where $m>0$ is a constant depending on the elastic material and $B$ a second order differential operator.
It is also proved (in the same reference \cite{Ci05}) that the Koiter energy is $H^2$-coercive, that is there exists a positive constant $c_0>0$ for which 
\begin{equation}\label{eqn:c0-coercivity}
K\left(\eta\right)\ge c_{0}\left\Vert \eta\right\Vert _{H^{2}\left(\omega\right)}^{2}\quad\text{ for all }\eta\in H_{0}^{2}\left(\omega\right)
.\end{equation}
\subsubsection*{The equations}
Suppose now that at each time $t$ the domain $\Omega_{\eta(t)}$ is filled with an \emph{incompressible fluid}, whose velocity is $\mathbf{u}$ and pressure $p$ and fulfills the Navier-Stokes system. We consider here the  so-called \emph{dynamic pressure condition} $\frac{1}{2}\left|\mathbf{u}\right|^{2}+p=P$ where $P:I\times \Gamma_p\mapsto\mathbb{R}$ is a prescribed, time-periodic function. Other types of inflow/outflow boundary conditions could be treated by the methodology developed here (see Remark~\ref{rem:boundary}). 

Therefore the fluid equations  take the form
\begin{equation}\label{eqn:fluid}
\begin{cases}
\rho_f(\partial_{t}\mathbf{u}+\left(\mathbf{u}\cdot\nabla\right)\mathbf{u})=\text{\text{div}\ensuremath{\mathbb{T}}+\ensuremath{\mathbf{f}} } & \text{in}\ I\times\Omega_{\eta}\\
\text{div}\mathbf{u}=0 & \text{in}\ I\times\Omega_{\eta}\\
\rho_f\frac{\left|\mathbf{u}\right|^{2}}{2}+p=P& \text{on}\ I\times\Gamma_{p}\\
u\cdot\tau_{i}=0 & \text{on}\ I\times\Gamma_{p}\text{ for }i\in\{1,2\}.\\
\mathbf{u}=\mathbf{0} & \text{on}\ I\times\Gamma_{D}\\
\mathbf{u}\circ\phi_{\eta}=\left(\partial_{t}\eta\right)\nu & \text{on}\ I\times\omega\\
\mathbf{u}\left(0,\cdot\right)=\mathbf{u}\left(T,\cdot\right) & \text{in}\ \left\{ 0\right\} \times\Omega_{\eta\left(0\right)}
\end{cases}
\end{equation}

Here $\rho_f$ represents the fluid (constant) density, 
$\mathbb{T}$ is the  \emph{Cauchy stress tensor}
 \begin{equation}\label{eqn:Cauchy-stress}
\mathbb{T}\left(\mathbf{u},p\right):=\sigma\cdot D\left(\mathbf{u}\right)-p\mathbb{I}_{3}
\end{equation}
and $D$ denotes the \emph{symmetric gradient} defined as
\begin{equation}
 D\left(\mathbf{u}\right):=\frac{1}{2}\left(\nabla\mathbf{u}+\left(\nabla\mathbf{u}\right)^{T}\right)
\end{equation}
while
$rho_f$ and $\sigma$ represent the fluid density and viscosity respectively.

Concerning now the shell displacement $\eta$, it solves   the wave-type equation
\begin{equation}\label{eqn:shell}
\begin{cases}
\rho_{S}h\partial_{tt}\eta+K^{\prime}\left(\eta\right)=g+\mathbf{F}\cdot\nu & \text{in}\ I\times\omega\\
\eta=\left|\nabla\eta\right|=0 & \text{on}\ I\times\partial\omega\\
\eta\left(0,\cdot\right)=\eta\left(T,\cdot\right) & \text{in}\ \omega
\end{cases}
\end{equation}
where $\rho_{S}>0$ denotes the density of the shell. 
Here
$K^{\prime}(\eta)$ denotes the $L^{2}$ gradient of the functional $K(\eta)$ from \eqref{eqn:Koiter-energy} and therefore
\[
K\left(\eta,\xi\right):=\langle K'(\eta),\xi\rangle = m\int_\omega\nabla^2\eta\cdot \nabla^2\xi\, dA+ \langle B\eta,\xi\rangle\text{ for all }\eta,\xi\in H_{0}^{2}\left(\omega\right).
\]
Further, $\mathbf{F}$ is the force exerted by the fluid on the boundary of the domain and is  defined as
\begin{equation}\label{eqn:force-fluid-shell-normal}
\mathbf{F}\left(t,\cdot\right):=-\mathbb{T}\left(\mathbf{u},p\right)\nu_{\eta\left(t\right)}\circ\phi_{\eta\left(t\right)}\left|\det \nabla\phi_{\eta\left(t\right)}\right|.
\end{equation}
Note that the definition $\mathbf{F}$ keeps track of possible dilations or contractions of the volume, since a change of variables would produce the term $\frac{\det\nabla\phi_{\eta\left(t\right)}}{\left|\det\nabla\phi_{\eta\left(t\right)}\right|}=\text{sign\ensuremath{\left(\det\nabla\phi_{\eta\left(t\right)}\right)}}$.
In the following we shall take for simplicity
\[
\sigma=2\text{ and }\rho_f=\rho_S h=1.
\]
Let us now write the coupled system formed by \eqref{eqn:fluid} and \eqref{eqn:shell} to obtain the \emph{fluid-structure interaction problem} which consists in the following system
\begin{equation}\label{eqn:system}
\begin{cases}
\partial_{t}\mathbf{u}+\left(\mathbf{u}\cdot\nabla\right)\mathbf{u}=\text{\text{div}\ensuremath{\mathbb{T}}+\ensuremath{\mathbf{f}} } & \text{in}\ I\times\Omega_{\eta}\\
\text{div}\mathbf{u}=0 & \text{in}\ I\times\Omega_{\eta}\\
\frac{\left|\mathbf{u}\right|^{2}}{2}+p=P\left(t\right) & \text{on}\ I\times\Gamma_{p}\\
\mathbf{u}\cdot{\bf \tau}_{i}=0 & \text{on}\ I\times\Gamma_{p}\text{ for }i\in\{1,2\}\\
\mathbf{u}=\mathbf{0} & \text{on}\ I\times\Gamma_{D}\\
\mathbf{u}\circ\phi_{\eta}=\left(\partial_{t}\eta\right)\nu & \text{on}\ I\times\omega\\
\mathbf{u}\left(0,\cdot\right)=\mathbf{u}\left(T,\cdot\right) & \text{in}\ \left\{ 0\right\} \times\Omega_{\eta\left(0\right)}\\
\partial_{tt}\eta+K^{\prime}\left(\eta\right)=g+{\bf F}\cdot\nu & \text{in}\ I\times\omega\\
\eta\left(0,\cdot\right)=\eta\left(T,\cdot\right),\partial_{t}\eta\left(0,\cdot\right)=\partial_{t}\eta\left(T,\cdot\right) & \text{in}\ \omega\\
\left|\eta\right|=\left|\nabla\eta\right|=0 & \text{on}\ I\times\partial\omega.
\end{cases}
\end{equation}
\subsection{Main results}
Our main result has a quantitative dependence on how curved the reference geometry is. For that we introduce $\kappa_1(\bfp)$ and $\kappa_2(\bfp)$ as the two principle curvatures of $M$ at the point $\bfp$. Further, to shorten notation we define $\kappa:=(\kappa_1,\kappa_2)$ containing the full curvature information of $M$.

\begin{theorem}\label{thm:main}
Let $\Omega$ be a given reference domain with properties described as above. Then there exists a constant $\tilde{C}$ depending on $\Gamma_p,\Gamma_D,\kappa,L,\abs{M}$ and $c_0$ such that if
$\left(\mathbf{f},g,P\right)\in L_{\text{per}}^{2}\left(I;L^2(\mathbb{R}^{3})\right)\times L_{\text{per}}^{2}\left(I;L^2(\omega)\right)\times L_{\text{per}}^{2}\left(I;L^2(\Gamma_p)\right)$ satisfies
\begin{equation}
\left\Vert \mathbf{f}\right\Vert _{L_{t}^{2}L_{x}^{2}}+\left\Vert g\right\Vert _{L_{t}^{2}L_{x}^{2}}+\left\Vert P\right\Vert _{L_{t}^{2}L_{x}^{2}}\le\tilde{C}
\end{equation}
then there exists at least one weak time-periodic solution $\left(\mathbf{u},\eta\right)$ as it is defined in Definition~\ref{def:weak-soln}. 
Furthermore, it enjoys the \emph{diffusion estimate}
\begin{align*}\int_{0}^{T}\int_{\Omega_{\eta\left(t\right)}}\left|\nabla\mathbf{u}\right|^{2}\,dxdt\leq\int_{0}^{T}\int_{\Omega_{\eta\left(t\right)}}\mathbf{f}\cdot{\bf u}\,dxdt+\int_{0}^{T}\int_{\omega}g\partial_{t}\eta\,dAdt+\int_{0}^{T}\int_{\Gamma_{p}}P{\bf u}\cdot\nu\,dAdt
\end{align*}

and the {\em additional regularity estimate}
\begin{equation}\label{eqn:main-energy-estimate}
\sup_{t\in I}E\left(t\right)+\left\Vert \mathbf{u}\right\Vert _{L_{t}^{2}W_{x}^{1,2}}^{2}\leq C,
\end{equation}
with $C$ depending on $\tilde{C}$, $\Gamma_p,\Gamma_D,\kappa,L,\abs{M},c_0$ and $T$,
where $E(t)$ denotes here and in the rest of the paper
\begin{equation}
    E\left(t\right):=\frac{1}{2}\int_{\Omega_{\eta\left(t\right)}}\left|\mathbf{u}\right|^{2}dx+\frac{1}{2}\int_{\omega}\left|\partial_{t}\eta\right|^{2}dA+K\left(\eta\left(t,\cdot\right)\right),\quad t\in I.
\end{equation}
\end{theorem}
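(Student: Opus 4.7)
The plan is to build the time-periodic solution through a multi-level approximation scheme tailored to the fact that in the periodic setting the standard energy inequality does not close by itself. First, regularize the shell equation by adding a small viscous term such as $\varepsilon\Delta^{2}\partial_{t}\eta$ so that higher regularity becomes available on the approximate level; then pull the fluid problem back to the fixed reference cylinder $I\times\Omega$ via the diffeomorphism $\psi_{\eta(t)}$ from \eqref{eqn:psi-eta-t}; finally project onto a carefully designed Galerkin basis. For the shell, take eigenfunctions of the operator $K'$, which lie in $H^{4}(\omega)\cap H_{0}^{2}(\omega)$; for the fluid, use a countable divergence-free family in $W^{1,2}(\Omega)$ that vanishes on $\Gamma_{D}$, is tangential on $\Gamma_{p}$, and is compatible at $M$ with the normal extensions of the shell modes along $\nu$. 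Periodicity at each level is then obtained through a Brouwer fixed-point argument applied to the finite-dimensional Poincar\'e map, provided an invariant ball in phase space is available.

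\textbf{A priori estimates.} The first estimate is the diffusion inequality obtained by testing with $(\mathbf{u},\partial_{t}\eta)$ and using $E(0)=E(T)$:
\[
\int_{0}^{T}\!\!\int_{\Omega_{\eta(t)}}|\nabla\mathbf{u}|^{2}\,dx\,dt \leq \int_{0}^{T}\!\!\int_{\Omega_{\eta(t)}}\mathbf{f}\cdot\mathbf{u}\,dx\,dt + \int_{0}^{T}\!\!\int_{\omega} g\,\partial_{t}\eta\,dA\,dt + \int_{0}^{T}\!\!\int_{\Gamma_{p}} P\,\mathbf{u}\cdot\nu\,dA\,dt.
\]
After Korn's inequality, trace estimates, and a pressure-recovery argument via a \Bogovskij{} operator adapted to the moving domain, this turns into a quantitative $L_{t}^{2}W_{x}^{1,2}$ bound for $\mathbf{u}$ in terms of the data alone. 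The crucial second estimate, required for $\sup_{t}E(t)$, is obtained by testing the shell equation with $K'(\eta)$; the special Galerkin basis renders this legitimate already at the discrete level. Combining it with the fluid estimate and with a careful treatment of the curvature-dependent force $\mathbf{F}\cdot\nu$ through the Piola transform leads to a closed inequality of the schematic form
\[
\sup_{t\in I}E(t) + \|\mathbf{u}\|_{L_{t}^{2}W_{x}^{1,2}}^{2} \leq C_{0}\bigl(\|\mathbf{f}\|_{L_{t}^{2}L_{x}^{2}}^{2}+\|g\|_{L_{t}^{2}L_{x}^{2}}^{2}+\|P\|_{L_{t}^{2}L_{x}^{2}}^{2}\bigr) + C_{0}\,\kappa\,\bigl(\sup_{t}E(t)\bigr)^{3/2}.
\]

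\textbf{Role of the smallness condition and the curvature.} The nonlinear right-hand side is exactly where $\kappa_{1},\kappa_{2}$ enter quantitatively: differentiating the Piola transform and the induced metric of $M$ against $\nu$ produces factors of the principal curvatures, which degrade the coercivity \eqref{eqn:c0-coercivity} when the shell deflects substantially. The smallness hypothesis on $(\mathbf{f},g,P)$ quantified through $\tilde{C}$ is exactly what closes the above nonlinear inequality by a continuation argument: the linear part confines $\sup_{t}E(t)$ to a ball on which the curvature correction $C_{0}\kappa(\sup_{t}E)^{3/2}$ is dominated. As $\kappa\to 0$ the admissible $\tilde{C}$ deteriorates only in the geometric constants and the flat-plate setting of~\cite{our-paper} is recovered.

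\textbf{Limit passage and main obstacle.} The final passage to the limit combines the moving-domain Aubin-Lions machinery of Lengeler-R\r{u}\v{z}i\v{c}ka for the fluid with compactness for $\partial_{t}\eta$ extracted from the uniform $\sup_{t}E(t)$ bound; the boundary term on $\Gamma_{p}$ is then handled via trace compactness coming from the uniform $W^{1,2}$ estimate. The main obstacle is ensuring that the extra regularity estimate survives \emph{each} approximation level: if one postpones testing with $K'(\eta)$ until after the Galerkin limit has been taken, one must justify $K'(\eta)$ as an admissible test function for a merely distributional periodic solution, which cannot be accessed by a Gronwall argument in the absence of initial data. This is precisely the reason to build the Galerkin basis along the higher-order estimate from the outset, so that the bound is transported, together with the smallness-constrained closure, all the way down to the limiting weak solution.
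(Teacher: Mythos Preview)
Your proposal contains a genuine gap in the crucial second estimate. Testing the shell equation with $K'(\eta)$ does not close in the time-periodic setting: after integrating $\int_{0}^{T}\langle\partial_{tt}\eta,K'(\eta)\rangle\,dt$ by parts in time you produce $-\int_{0}^{T}\langle\partial_{t}\eta,K'(\partial_{t}\eta)\rangle\,dt=-2\int_{0}^{T}K(\partial_{t}\eta)\,dt$, a wrong-sign $H^{2}$-term for $\partial_{t}\eta$ that the energy (which only controls $\|\partial_{t}\eta\|_{L^{2}}$) cannot absorb. The $\varepsilon\Delta^{2}\partial_{t}\eta$ regularization only bounds $\varepsilon\int_{0}^{T}K(\partial_{t}\eta)\,dt$, so the resulting estimate for $K'(\eta)$ carries a factor $\varepsilon^{-1}$ and does not survive the limit. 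Moreover, testing only the shell equation forces you to estimate the fluid force $\mathbf{F}\cdot\nu$ explicitly, i.e.\ to recover the pressure on the moving interface, whereas the whole point of the coupled weak formulation is that this term cancels.

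The paper's route is different and avoids both obstacles. One first \emph{decouples} by prescribing a periodic displacement $\delta$ with $\alpha\|\delta\|_{L^{\infty}_{t}W^{1,2}_{x}}^{2}+\|\partial_{t}\delta\|_{L^{2}_{t,x}}^{2}\le\tilde{M}^{2}$, solves the linearized problem in $\Omega_{\delta}$, and only afterwards recovers the coupling by a Kakutani--Glicksberg--Fan set-valued fixed point $\delta\mapsto\{\eta\}$. The second estimate is then obtained by testing the \emph{full} coupled weak formulation with the pair $\bigl(\mathcal{F}_{\delta}(\eta),\eta\bigr)$, where $\mathcal{F}_{\delta}$ is the solenoidal extension of Proposition~\ref{prop:Solenoidal-extension-o}. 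On the shell side this yields $\int_{0}^{T}K(\eta)\,dt$ directly, balanced only by $\int_{0}^{T}\|\partial_{t}\eta\|_{L^{2}}^{2}\,dt$, which the diffusion estimate together with the trace Lemma~\ref{lem:trace} already controls; on the fluid side one merely needs the $L^{p}$ and $W^{1,p}$ bounds for $\mathcal{F}_{\delta}(\eta)$, which is where the curvature constants $c(L,\kappa)$ and the a-priori bound $\tilde{M}$ on $\delta$ enter. The closure is then not a continuation argument on a superlinear inequality, but the choice of $\tilde{C}$ (depending on $\tilde{M},L,\kappa,c_{0}$) so that the fixed-point image stays inside the ball $\{E\le\tilde{M}^{2}\}$; see Proposition~\ref{prop:formal-decoupled-estimates}(c). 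The Galerkin basis is built precisely from $\mathcal{F}_{\delta}(\xi_{k})$ together with Piola-transported interior modes, so that $(\mathcal{F}_{\delta}(\eta_{n}),\eta_{n})$ is an admissible discrete test pair and the estimate is available already at the ODE level; no structural viscosity is introduced.
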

\begin{remark}[On the size of $\tilde{C}$ and C] The constant $C$ is made precise in Corollary~\ref{cor} and depends  quadratically on $\tilde{C}$.
We made quite an effort to make the dependence of $\tilde{C}$ on the reference geometry, the time interval and the Lam\'e constants precise. See again Corollary~\ref{cor} for the precise dependencies.

Important is that the constant $\tilde{C}$ that appears in Theorem~\ref{thm:main} has not to be small. The size is reduced first to ensure a-priori that a self-intersection is excluded and second to compensate for the curved reference geometry. Large forces are admissible if the Lam\'{e} coefficients of the shell are large in comparison to its thickness, which resembled the case of a stiff thick solid. Indeed, for $c_0\to \infty$, we find $\tilde{C}\to \infty$. Moreover, when $\kappa_1,\kappa_2\to 0$ in Corollary~\ref{cor}, the estimates relate to the estimates derived in \cite{our-paper} for the flat case. This means the dependencies only differ by the fact that here pressure boundary values are considered.
\end{remark}

\begin{remark}[On the size of $\tilde{C}$ and C] The constant $C$ is made precise in Corollary~\ref{cor} and depends  quadratically on $\tilde{C}$.
We made quite an effort to make the dependence of $\tilde{C}$ on the reference geometry, the time interval and the Lam\'e constants precise. See again Corollary~\ref{cor} for the precise dependencies.

Important is that the constant $\tilde{C}$ that appears in Theorem~\ref{thm:main} has not to be small. The size is reduced first to ensure a-priori that a self-intersection is excluded and second to compensate for the curved reference geometry. Large forces are admissible if the Lam\'{e} coefficients of the shell are large in comparison to its thickness, which resembled the case of a stiff thick solid. Indeed, for $c_0\to \infty$, we find $\tilde{C}\to \infty$. Moreover, when $\kappa_1,\kappa_2\to 0$ in Corollary~\ref{cor}, the estimates relate to the estimates derived in \cite{our-paper} for the flat case. This means the dependencies only differ by the fact that here pressure boundary values are considered.
\end{remark}
\begin{remark}[Case $\Gamma_p=0$]
\label{rem:noslip}
In case no inflow or outflow is present, the mean-value of the pressure is determined by the shell. Hence the mean-value of the pressure is bound to depend on the Koiter energy. It is yet an open problem, how to derive an a-priori estimate in that case. An exception is the case, when $M$ is flat. This case was treated in \cite{our-paper}.
\end{remark}
\begin{remark}
\label{rem:boundary}
The boundary  condition $\frac{1}{2}\left|{\bf u}\right|^{2}+p=P_{\text{in/out }}$ representing the prescribed inflow/outflow is just one possible choice to model inflow/outflow.  Another alternative would have been a so called "do-nothing" boundary condition which would consist in imposing $\mathbb{T}\nu=\frac{1}{2}\left|{\bf u}\right|^{2}\nu\ \text{on}\ \Gamma_{\text{out}}$. This boundary condition was rigorously justified in \cite{BaSt21}.
\end{remark}
    Let us observe that our methodology also provides a proof of the following result on {\em initial value problems} including inflow/outflow boundary values. To our best knowledge, it was not considered in the literature; compare with \cite[Theorem 3.5]{LR14}, where the respective initial value problem with everywhere no-slip boundary values was considered. 
    \begin{theorem}
    \label{thm:cauchy}
        Let
        \[
\left(\mathbf{f},g,P\right)\in L_{\text{loc}}^{1}\left(\left[0,\infty\right);L_{\text{loc}}^{2}\left(\mathbb{R}^{3}\right)\right)\times L_{\text{loc}}^{1}\left(\left[0,\infty\right);L_{\text{loc}}^{2}\left(\omega\right)\right)\times L_{\text{loc}}^{2}\left(\left[0,\infty\right)\right)
        \]
 and let the initial-data 
 \[
\left(\mathbf{u}_{0},\eta_{0},\eta_{1}\right)\in L^{2}\left(\Omega_{\eta\left(0\right)}\right)\times H_{0}^{2}\left(\omega\right)\times L^{2}\left(\omega\right)
 \]
 be such that $\text{div}\mathbf{u}_{0}=0$ and $\text{tr}_{\eta_{0}}\mathbf{u}_{0}=\eta_{1}\nu$. Then, there exists a time $0<T^{*}\le \infty$ such that the corresponding initial-value problem admits a weak solution satisfying the usual energy inequality in $[0,T^{*}]$. The time $T^*<\infty$ only in case of topological changes of the boundary.
    \end{theorem}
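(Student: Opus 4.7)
The plan is to reuse the construction developed for Theorem~\ref{thm:main} essentially verbatim, and exploit the fact that the Cauchy problem is actually easier than the time-periodic one. The key observation is that in the initial-value setting the energy identity is directly coercive: no cancellation between initial and terminal values is needed, so no smallness assumption on $(\mathbf{f},g,P)$ is required, and the Galerkin basis that was tailored for the extra regularity estimates in the periodic proof works here too.

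First I would construct Galerkin approximations $(\mathbf{u}_N,\eta_N)$ using the same spectral basis for the fluid and structure as in the proof of Theorem~\ref{thm:main}, while projecting the initial data $(\mathbf{u}_0,\eta_0,\eta_1)$ onto the finite-dimensional spaces in a way compatible with $\tr_{\eta_0}\mathbf{u}_0=\eta_1\nu$; the approximate system is a finite-dimensional ODE on the reference geometry (after using the Piola-type transform $\psi_{\eta}$ from~\eqref{eqn:psi-eta-t}), which admits a unique local solution by standard ODE theory for as long as the approximate deformation keeps $\phi_{\eta_N(t)}$ a homeomorphism. Next I would test the Galerkin equations with $\mathbf{u}_N$ and $\partial_t\eta_N$ to obtain the energy identity
\begin{equation*}
E(t)+\int_0^t\int_{\Omega_{\eta_N(s)}}|\nabla\mathbf{u}_N|^2\,dx\,ds\leq E(0)+\int_0^t\!\!\int_{\Omega_{\eta_N}}\mathbf{f}\cdot\mathbf{u}_N+\int_0^t\!\!\int_\omega g\,\partial_t\eta_N+\int_0^t\!\!\int_{\Gamma_p}P\,\mathbf{u}_N\cdot\nu.
\end{equation*}
A Gronwall argument then yields $N$-uniform bounds on every time interval where the deformation stays admissible, which by the $H^2$-coercivity~\eqref{eqn:c0-coercivity} of $K$ and the embedding $H^2(\omega)\hookrightarrow C(\overline\omega)$ translates into a uniform bound on $\|\eta_N(t)\|_{L^\infty}$.

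The passage to the limit $N\to\infty$ is carried out with exactly the compactness machinery used in Theorem~\ref{thm:main}: Aubin--Lions in the fluid variables combined with the geometric compactness for the moving domain via $\psi_{\eta_N}$, and weak-$\ast$ convergence of $\partial_t\eta_N$ in $L^\infty_tL^2_x$ together with strong convergence in $C_tH^2_x$ of $\eta_N$. This produces a weak solution on any time interval $[0,T']$ on which a uniform quantitative bound $\|\eta(t)\|_{L^\infty(\omega)}\leq d(t)<d_{\max}$ (with $d_{\max}$ the distance of $M$ to self-intersection with the rest of $\partial\Omega$ or with itself) is maintained. Finally, defining $T^*$ as the supremum of such admissible times, one restarts the construction at any $t<T^*$: if $T^*<\infty$, the energy bound remains finite at $T^*$, so the only way the construction can terminate is that $\phi_{\eta(T^*)}$ fails to be a homeomorphism, i.e.~a topological change of $\partial\Omega_{\eta(T^*)}$.

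The main obstacle will be Step 3, the continuation/maximality argument, since it requires identifying the correct geometric criterion that prevents restart. The energy bound alone does not exclude degeneracy because the time the $L^\infty$-bound on $\eta$ needs to reach $d_{\max}$ depends nonlinearly on $E(0)$ and the integrated forcing. Here one has to show that the Gronwall-type estimate is continuous in $t$ all the way up to $T^*$, so that either $T^*=\infty$ or $\phi_{\eta(t)}$ genuinely loses injectivity or $C^1$-regularity as $t\uparrow T^*$. All remaining steps (weak formulation, convective term handling, convergence of the pressure boundary term, trace identification $\mathbf{u}\circ\phi_\eta=(\partial_t\eta)\nu$) are formally identical to the corresponding steps in the proof of Theorem~\ref{thm:main}, with the periodicity condition replaced by the matching of initial data.
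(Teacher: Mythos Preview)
Your high-level plan matches the paper's: replace the periodicity condition by matching of the projected initial data, observe that the standard energy inequality integrated from $0$ to $t$ (plus Gronwall) already gives $n$-uniform bounds so no smallness on $(\mathbf{f},g,P)$ is needed, pass to the limit with the compactness of Subsection~\ref{subsection:L2-compact}, and iteratively extend until a topological change of the boundary.

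However, your description of the Galerkin step glosses over the coupling and is inconsistent with the basis you say you want to reuse. The family $(\mathbf{X}_k)_k$ of~\eqref{eqn:Xk-def} is constructed through $\mathcal{F}_{\delta(t)}$ and $\mathcal{J}_{\delta(t)}$ and hence depends on a \emph{prescribed} displacement $\delta$; the Galerkin system~\eqref{eqn:galerkin} is therefore posed on $\Omega_{\delta(t)}$, not on the domain generated by the unknown $\eta_n$. With this basis you cannot simply ``solve a finite-dimensional ODE on the reference geometry'' for the coupled problem. In the paper the coupling $\delta_n=\eta_n$ is recovered by the Kakutani--Glicksberg--Fan fixed point (Theorem~\ref{thm: Kakutain}) exactly as in the periodic proof, only with $\mathcal{S}(\mathbf{b}_n)$ now consisting of solutions $\mathbf{a}_n$ that match the projected initial data rather than being time-periodic. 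The relevant observation is that the Gronwall energy bound from~\eqref{eqn:energy-balance-E-n} is uniform in $\delta_n\in\mathcal{B}_n$ and does not invoke Proposition~\ref{prop:formal-decoupled-estimates}, so $\mathcal{S}(\mathcal{B}_n)\subset\mathcal{B}_n$ holds without any restriction on $\tilde C$. Your write-up omits this fixed-point step, which is where the domain--solution coupling is actually achieved in this framework; either insert it, or commit to a genuinely different route (fixed basis on $\Omega$ and a quasilinear ODE) and then you are no longer reusing the paper's construction ``verbatim''.
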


\subsection{Mathematical strategy and technical novelties}
\label{sec:math}
When shells are considered the control to exclude geometric/topological degeneracy is much more subtle then in the previously considered case of  plates~\cite{our-paper}.
In particular it is necessary to evaluate carefully the connection between the size of the forcing and its potential impact on the geometry. This reveals on a technical level in form of non-quadratic terms that can only be estimated via a careful understanding of their meaning. The control of their appearance is the main achievement of the present paper:
See Proposition~\ref{prop:formal-decoupled-estimates} and Corollary~\ref{cor}, where precise dependencies are introduced that do allow to close the circle of estimates. The necessary technical tools are a here introduced direct trace estimate, Lemma~\ref{lem:trace} and a refined divergence-free extension operator, introduced in \cite{MS22}. The operator allows to extend {\em any} functions defined on the moving boundary to the whole fluid domain by a divergence free function. This is possible as a (smooth) inflow/outflow is created to make the normal trace zero. Hence, the operator creates coupled test functions, suitable for the weak formulation. This operator became a central object in the theory of fluid-structure interactions~\cite{breit-schwarz-fourier,kreml2023time}. Hence the innovations here might be of independent interest,  see Proposition~\ref{prop:Solenoidal-extension-o}.

The main technical achievement of the present paper is to develop a methodology that allows to perform the essential parts of the construction, including all necessary a-priori estimates, on the Galerkin level. We perform the following steps on the discrete level.
\begin{enumerate}
\item {\bf We decouple the problem.} This means for a {\em given} function $\delta:[0,T]\times \omega\to [-L,L]$ that is defining the time-changing fluid-domain, we seek coupled solutions. We linearize the convective term in a natural why in dependence of $\delta$.
\item {\bf We solve the Cauchy-problem} for a given initial data for the linear decoupled problem. This follows by standard ODE theory
\item {\bf Fixed point one:} For each given geometry $\delta$ we find an initial data that provides a time-periodic solution. Here we use Sch\"affer's fixed point theorem. It relays on a-priori estimates that are only possible due to the special construction of the Galerkin bases. 
\item {\bf Fixed point two:} We map a time-periodic geometry to its set of time-periodic solutions and find a fixed point there. Here we use the set-valued fixed point of Kakutani-Glicksberg-Fan.
\end{enumerate}
We wish to emphasize that the a-priori estimates necessary for each step are obtained with great care, such that the argument closes. The advantage of the here introduced methodology is that one can perform the fixed points on the Galerkin level {\em using the special structure of the basis}. This seems to us a natural approach to the analysis of weak solutions to fluid-structure interactions, {\em as the space of test-function is an essential part of the solution}. 

Hence the construction of a suitable Galerkin basis is one of the technical innovations of the paper. To some extend it follows the ideas of~\cite{LR14,breit-schwarz-fourier}, as we are deviating between the part of the fluid-velocity that relates to the boundary deformation and its bulk part. Both parts of the basis will eventually depend on the solution, as is typical for fluid-structure interactions. A suitable bases for the here considered purposes is chosen as follows.
 \begin{itemize}
 \item The part of the Galerkin bases that is related to the deforming domain is constructed explicitly via the solenoidal extension operator Proposition~\ref{prop:Solenoidal-extension-o}: A suitable basis for the solid equation is extended into the fluid domain.
 \item The second part of the Galerkin basis has a zero trace at the moving boundary part. Here we use a solenoidal push-forward of the basis of eigenvalues of the Stokes operator on the reference domain (following the ideas of~\cite{LR14}). Hence again the basis is critically depending on the geometry and hence the solution. Moreover, the setting of mixed boundary conditions excludes a strong a-priori regularity for the eigenvalues of the respective Stokes operator, even in smooth domains. Accordingly we rely on abstract results for eigenvalues of self-adjoined operators in Hilbert spaces, which means that the control is reduced to $H^1$ and $L^2$.
 \end{itemize}
What might be considered as a disadvantage is that the limit passage of a non-linear sequence of Galerkin solution has to be undertaken. This seems to us to stay a debatable point. In particular as the non-linear limit passage involves the $L^2$ compactness of a sub-sequence of solution, which is a quite involving argument in the setting of fluid-structure interactions. Nevertheless we undertook the effort here to show that the compactness is true, which turned out to be substantially more technical then for sequences of continuous solutions, see Subsection~\ref{sec:L2comp}. As it works for a basis with rather weak regularity assumption, we believe it a valid contribution for the future, as it allows to use the advantages to perform a-priori analysis on an explicit and well chosen Galerkin basis. This advantage seemed critical to us in the present setting and we believe it to be very helpful for more future applications whenever time-changing domains are considered.

\subsection{Overview of the paper}
In Section~\ref{sec:prelim} we introduce the moving domains, the divergence-free extension operator $\mathcal{F}_{\delta}$, an explicit trace estimate and introduce the definition of the time-periodic weak solution.
 
Section~\ref{sec:decoupled-problem} is dedicated to establish the key a-priori estimates. They are contained in Proposition~\ref{prop:formal-decoupled-estimates}, Corollary~\ref{cor} and Corollary~\ref{cor:gal}. This means introducing according suitable decoupled systems and to show how the geometric characteristics influencing the estimates can be controlled.
The paper ends with Section~\ref{sec:Proof-main-thm} where we perform first the existence of a stabilized coupled solution, by the strategy indicated above. In the final Subsection~\ref{sec:L2comp} it is shown how this result implies the proof of Theorem~\ref{thm:main} and Theorem~\ref{thm:cauchy}.

\section{Preliminaries}\label{sec:prelim}

\subsection{Function spaces on variable domains}\label{subsection:function-spaces}
For admisible domains $\Omega_{\eta}$, that is for which $\left\Vert \eta\right\Vert _{L_{t,x}^{\infty}}<L$,
we introduce the Lebesgue and Sobolev spaces adapted to the moving domains $\Omega_{\eta}$ are defined as follows: for $1 \le p, \ q \le \infty$ we have
\begin{equation}
\begin{aligned}L^{p}\left(I;L^{q}\left(\Omega_{\eta}\right)\right):= & \left\{ \mathbf{v}\in L^{1}\left(I\times\Omega_{\eta}\right):\mathbf{v}\left(t,\cdot\right)\in L^{q}\left(\Omega_{\eta\left(t\right)}\right)\ \text{for a .e.}\ t\in I,\right.\\
 & \left\Vert \mathbf{v}\left(t,\cdot\right)\right\Vert _{L^{q}\left(\Omega_{\eta\left(t\right)}\right)}\in L^{p}\left(I\right)\left.\right\} \\
L^{p}\left(I;W^{1,q}\left(\Omega_{\eta}\right)\right):= & \left\{ \mathbf{v}\in L^{1}\left(I\times\Omega_{\eta}\right):\mathbf{v}\left(t,\cdot\right)\in W^{1,q}\left(\Omega_{\eta\left(t\right)}\right)\ \text{for a .e.}\ t\in I,\right.\\
 & \left\Vert \nabla \mathbf{v}\left(t,\cdot\right)\right\Vert _{L^{q}\left(\Omega_{\eta\left(t\right)}\right)}\in L^{p}\left(I\right)\left.\right\}. 
\end{aligned}
\end{equation}
One can now introduce the corresponding \emph{time-periodic} spaces of functions $L_{\text{per}}^{p}\left(I;L^{q}\left(\Omega_{\eta}\right)\right)$ and resp. $L_{\text{per}}^{p}\left(I;W^{1,q}\left(\Omega_{\eta}\right)\right)$ obtained by taking the closure of the set $\left\{ \varphi\in C^{\infty}\left(I\times\mathbb{R}^{3}\right):\varphi\left(0,\cdot\right)=\varphi\left(T,\cdot\right)\right\} $ in the respective norms.

Then, given a time-periodic moving domain $\Omega_{\delta}$ determined by a displacement   $\delta\in C^{3}\left(\omega\right)$ with $\left\Vert \delta\right\Vert _{L_{t,x}^{\infty}}<L$ 
we can introduce the \emph{space of solutions} $V_{S}^{\delta}$  by
\begin{align*}V_{S}^{\delta}:= & \Big\{ \left(\mathbf{u},\eta\right)\in L_{\text{per}}^{2}\left(I;H_{\text{div}}^{1}\left(\Omega_{\delta}\right)\right)\cap L_{\text{per}}^{\infty}\left(I;L^{2}\left(\Omega_{\delta}\right)\right)\times L_{\text{per}}^{\infty}\left(I;H_{0}^{2}\left(\omega\right)\right)\cap W_{\text{per}}^{1,\infty}\left(I;L^{2}\left(\omega\right)\right):
\\
& \quad \mathbf{u}=0\text{ on }\Gamma_{D},\,\mathbf{u}\cdot\tau_{1}=\mathbf{u}\cdot\tau_{2}=0\text{ on }\Gamma_{p},\mathbf{u}\circ\phi_{\delta}=\partial_{t}\eta\nu\text{ on }\omega\Big\} 
\end{align*}
and the corresponding \emph{space  of test-functions} $V_{T}^{\delta}$ by
\begin{align*}
V_{T}^{\delta}:= & \Big\{ \left(\mathbf{q},\xi\right):\mathbf{q}\in W_{\text{per}}^{1,2}\left(I;H_{\text{div}}^{1}\left(\Omega_{\delta}\right)\right)\, :\, \mathbf{q}=0\text{ on }\Gamma_{D},\mathbf{q}\cdot\tau_{1}=\mathbf{q}\cdot\tau_{2}=0\text{ on }\Gamma_{p},
 \\
 & \xi\in W_{\text{per}}^{1,2}(I,L^2(\omega))\times L^2_{\text{per}}\left(I;H_{0}^{2}\left(\omega\right)\right),\mathbf{q}\circ\phi_{\delta}=\xi\nu\text{ on }\omega\Big\} 
\end{align*}
Their definition will become more meaningul after introducing our notion of solution in Subsection~\ref{subsection:weak-soln}.

\subsection{Some analytical tools}
First, we  point out the existence of the \emph{Piola mapping}: for any  $\varphi: \Omega \mapsto\mathbb{R}^{3}$ we can define its extension to the whole fluid domain $\Omega_{\delta(t)}$ by the formula
\[
\mathcal{J}_{\delta}\varphi:=\left(\nabla\psi_{\delta}\left(\det \nabla\psi_{\delta}\right)^{-1}\varphi\right)\circ\psi_{\delta}^{-1}\]
If $\delta$ is smooth, the mapping
 $\mathcal{J}_{\delta}$ defines an isomorphism between the Lebesgue and Sobolev spaces on $\Omega$ and the corresponding ones on $\Omega_\delta$  and moreover preserves the zero boundary values and the divergence-free constraint.

In order to  rigorously justify the traces  of  functions  defined on $\Omega_{\delta(t)}$ one may use the following \emph{trace operator}:
If $1 < p \le \infty$, then for any $r \in (1,p)$ the mapping 
\[
\text{tr}_{\delta}:W^{1,p}\left(\Omega_{\delta}\right)\mapsto W^{1-\frac{1}{r},r}\left(\omega\right),\quad\text{tr}_{\delta}\left(v\right):=\left(v\circ\psi_{\delta}\right)_{|\omega}
\] is well defined and continuous, with continuity constant depending on $\Omega, r, p,\delta$; for a proof see~\cite{LR14}. We provide here a trace theorem, that depends on the reference domain $\Omega$.
\begin{lemma}[Trace operator]
\label{lem:trace}
Assume that $\delta\in C^0_0(\omega)$ with $\norm{\delta}_\infty\leq L<L_0$ . Then $v\in C^0(\Omega_\delta)\cap W^{1,p}(\Omega)$ given by $v(\bfp+\delta\nu(\bfp))=\beta(\bfp)\nu(\bfp)$ on $M$ satisfies the estimate
\[
\norm{\beta}_{L^p(M)}\leq c(\kappa,L,L_0-L)\norm{ v}_{W^{1,p}(\Omega_\delta)},
\]
where $\kappa$ and $L$ are geometric properties of $M$ defined in the introduction.
\end{lemma}
\begin{proof}
We use the coordinates $x=\bfp(x)+s(x)\nu(\bfp(x))$
We take $\ell<L_0-L$, then we find 
\begin{align*}
\int_M \abs{\beta(\bfp)}d\bfp 
&\leq 
\fint_{-\ell}^0\int_M \abs{\beta(\bfp)-v(\bfp+(s+\delta(\bfp))\nu(\bfp))}d\bfp\, ds
+\fint_{-\ell}^0\int_M \abs{v(\bfp+(s+\delta(\bfp))\nu(\bfp))}d\bfp\, ds
\\
&\leq \fint_{-\ell}^0\int_M \absB{\int_{-s}^0\partial_\nu v(\bfp+(\sigma+\delta(\bfp))\nu)\cdot\nu(\bfp)d\sigma}d\bfp\, ds +\frac{L+\ell}{\ell}\fint_{-L-\ell}^0\int_M \chi_{\Omega_\delta}\abs{v(\bfp+s\nu(\bfp))}d\bfp\, ds
\\
&\leq \int_M \int_{-L-\ell}^0\chi_{\Omega_\delta}\abs{\nabla v(\bfp+s\nu(\bfp))}d\bfp\, ds
+c(L,\kappa)\int_{\Omega_\delta} \abs{v} d\bfp\, ds
\\
&\leq c(L,\kappa,L_0-L)\norm{v}_{W^{1,1}(\Omega_\delta)}.
\end{align*}
The $p$-case follows by taking $v=\abs{v}^p$. Indeed, then by Young's inequality
\[
\int_M \abs{\beta(\bfp)}^p d\bfp \leq c(\kappa,L)\norm{\abs{v}^p}_{W^{1,1}(\Omega_\delta)}\leq c(\kappa,L)\norm{v}_{W^{1,p}(\Omega_\delta)}^p. 
\]
\end{proof}

Next, we  recall the classical  Reynolds' Transport Theorem: 
\begin{theorem}\label{thm:Reynolds}
For all $g=g(t,x)$ such that the following quantities are smooth, it holds that 
\begin{equation}
    \frac{d}{dt}\int_{\Omega_{\eta\left(t\right)}}gdx=\int_{\Omega_{\eta\left(t\right)}}\partial_{t}gdx+\int_{\partial\Omega_{\eta\left(t\right)}}g\mathbf{v}\cdot\nu_{\eta}dA.
\end{equation}
\end{theorem}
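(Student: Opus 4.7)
The plan is to pull the time-dependent integral back to the fixed reference domain $\Omega$ via the diffeomorphism $\psi_{\eta(t)}:\Omega\to\Omega_{\eta(t)}$ from~\eqref{eqn:psi-eta-t}, differentiate under the integral sign (legitimate since the reference domain is $t$-independent), and then push the result forward again. Writing $J(t,y):=\det\nabla_y\psi_{\eta(t)}(y)$ and introducing the Eulerian velocity of the deformation
\[
\mathbf{V}(t,x):=\partial_t\psi_{\eta(t)}\bigl(\psi_{\eta(t)}^{-1}(x)\bigr),
\]
the standard change-of-variables formula converts $\int_{\Omega_{\eta(t)}}g\,dx$ into $\int_{\Omega}(g\circ\psi_{\eta(t)})\,J\,dy$.

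Once on the fixed domain, differentiating in $t$ produces three contributions: $(\partial_t g)\circ\psi_{\eta(t)}$, the chain-rule term $(\nabla g)\circ\psi_{\eta(t)}\cdot\partial_t\psi_{\eta(t)}$, and the derivative of the Jacobian, handled by Jacobi's formula $\partial_t J=J\,(\mathrm{div}\,\mathbf{V})\circ\psi_{\eta(t)}$. Pushing everything back to $\Omega_{\eta(t)}$ via the inverse change of variables then yields
\[
\frac{d}{dt}\int_{\Omega_{\eta(t)}}g\,dx=\int_{\Omega_{\eta(t)}}\partial_t g\,dx+\int_{\Omega_{\eta(t)}}\mathrm{div}(g\mathbf{V})\,dx.
\]
Applying the divergence theorem to the second term produces the announced boundary integral of $g\,\mathbf{V}\cdot\nu_\eta$. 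By construction of $\psi_{\eta(t)}$, the Eulerian velocity $\mathbf{V}$ vanishes on the steady parts $\Gamma_p\cup\Gamma_D$ and equals the physical boundary velocity $(\partial_t\eta)\nu$ (transported by $\phi_\eta$) on the moving component, so $\mathbf{V}$ agrees with the unnamed $\mathbf{v}$ appearing in the statement.

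The only nontrivial ingredients are Jacobi's formula and the interchange of $\partial_t$ with $\int_\Omega$; both are classical once the stated smoothness of $g$ and the (inherited) smoothness of $t\mapsto\psi_{\eta(t)}$ are in hand, the latter following from the $C^4$ regularity of $\phi$ and the smoothness assumption implicit in the hypothesis. I do not expect any substantive obstacle: the statement is recalled here purely as a computational tool, and the argument takes place entirely within the classical smooth setting before being used later against less regular integrands by density.
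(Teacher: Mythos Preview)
Your argument is correct and is exactly the standard derivation of Reynolds' transport theorem. Note, however, that the paper does not actually prove this statement: it is simply recalled as a classical analytical tool, followed only by the remark identifying $\mathbf{v}$ with the boundary speed $(\partial_t\eta\,\nu)\circ\phi_{\eta(t)}^{-1}$. So there is nothing to compare against; your write-up would serve as a perfectly acceptable self-contained justification should one be desired.
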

Here $\mathbf{v}$ is the speed of the boundary $\partial\Omega_{\eta(t)}$, so $\mathbf{v}\left(t,\cdot\right)=\left(\partial_{t}\eta\nu\right)\circ\phi_{\eta\left(t\right)}^{-1}$.
 
Then, by \cite[Lemma A.5]{LR14}
we  have  Korn's identity
\begin{equation}
\int_{\Omega_{\eta\left(t\right)}}D\left(\mathbf{u}\right):D\left(\mathbf{q}\right)dx=\frac{1}{2}\int_{\Omega_{\eta\left(t\right)}}\nabla\mathbf{u}:\nabla\mathbf{q}dx.
\end{equation}

Finally, it will be useful to consider the identity 
\begin{equation}
\label{eq:cov}
    \int_{\partial\Omega_{\eta}\cap \psi_\eta(M)}\left(f\nu\circ\phi_{\eta}^{-1}\right)\cdot\nu_{\eta}dA=\int_{\omega}fJ_{\eta}dA\quad\text{ for all } f\in L^{1}\left(\omega\right)
\end{equation}
where $J_{\eta}(t,x):=G\eta^{2}-2H\eta+1
$ 
with $G=\kappa_1 \cdot \kappa_2$ and $H=\kappa_1+\kappa_2$ denoting the principal curvatures of $\partial\Omega$. See \cite[Proposition 2.11]{LR14} for detailed computations.

Next, since we will perform several fixed-points, we shall need beyond the classical Leray-Schauder/Sch\"{a}ffer Theorem (see for example \cite[Section 9.2.2., Theorem 4]{evans}, also the following set-valued fixed-point result which can be found in \cite[Chapter 2, Section 5.8]{GD03}:
\begin{theorem}[Kakutani-Glicksberg-Fan]\label{thm: Kakutain} 
Let $C$ be a convex subset of a normed vector space $Z$ and let $F:C \to \mathcal{P}(C)$ be a  set-valued mapping which has closed graph. Moreover, let $F(C)$ be contained in a compact subset of $C$, and let $F(z)$ be non-empty, convex, and compact for all $z \in C$. Then $F$ possesses a fixed point, that is there is $c_0 \in C$ with $c_0 \in F(c_0)$. 
\end{theorem}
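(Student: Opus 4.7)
The plan is to approximate the set-valued mapping $F$ by continuous single-valued maps taking values in finite-dimensional simplices, apply Brouwer's fixed point theorem to those approximations, and pass to the limit using compactness of $F(C)$ together with the closed graph of $F$. This is essentially the standard reduction of Kakutani--Glicksberg--Fan to the finite-dimensional Brouwer setting.

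Let $K$ denote a compact subset of $C$ with $F(C)\subset K$. For each $n\in\mathbb{N}$ I would choose a finite $\tfrac{1}{n}$-net $\{k_1^n,\dots,k_{N_n}^n\}\subset K$ and put $S_n:=\operatorname{conv}\{k_1^n,\dots,k_{N_n}^n\}$, a compact convex subset of a finite-dimensional affine subspace; note $S_n\subset C$ since $K\subset C$ and $C$ is convex. To mimic a continuous selection of $F$ I would introduce
\[
\alpha_i^n(z):=\max\!\Bigl(0,\;\tfrac{1}{n}-\dist\bigl(k_i^n,F(z)\bigr)\Bigr),\qquad f_n(z):=\frac{\sum_{i=1}^{N_n}\alpha_i^n(z)\,k_i^n}{\sum_{j=1}^{N_n}\alpha_j^n(z)}.
\]
The denominator is strictly positive because $F(z)\subset K$ and the balls $B(k_i^n,\tfrac{1}{n})$ cover $K$. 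Upper semi-continuity of $F$ (inherited from the closed graph together with compactness of $K$ and of $F(z)$) yields lower semi-continuity of $z\mapsto\dist(k_i^n,F(z))$, each $\alpha_i^n$ is then upper semi-continuous, and a direct check shows that the normalized combination $f_n$ is continuous on $C$.

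Restricted to $S_n$, the map $f_n$ is a continuous self-map of a finite-dimensional compact convex set, so Brouwer's theorem delivers $c_n\in S_n$ with $f_n(c_n)=c_n$. By construction $c_n$ lies in the convex hull of those $k_i^n$ that are within $\tfrac{1}{n}$ of $F(c_n)$; writing each such $k_i^n=y_i+e_i$ with $y_i\in F(c_n)$ and $\|e_i\|<\tfrac{1}{n}$, and using convexity of $F(c_n)$, I obtain $\tilde c_n:=\sum_i \lambda_i y_i\in F(c_n)$ with $\|c_n-\tilde c_n\|<\tfrac{1}{n}$. Since $\{\tilde c_n\}\subset K$ compact, along a subsequence $\tilde c_{n_k}\to c_0\in K\subset C$, and hence also $c_{n_k}\to c_0$. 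The pairs $(c_{n_k},\tilde c_{n_k})$ all lie in $\operatorname{graph}(F)$, which is closed, so $(c_0,c_0)\in\operatorname{graph}(F)$, i.e.\ $c_0\in F(c_0)$.

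The main obstacle is the construction of the continuous approximation $f_n$: one only has upper semi-continuity of $F$, so a direct single-valued continuous selection is generally unavailable (Michael's selection theorem would require lower semi-continuity), which forces the Urysohn/partition-of-unity construction above and a careful continuity check that leans on compactness of the values of $F$. A second essential point is that convexity of the values of $F$ is precisely what converts the approximate fixed point $c_n\in\operatorname{conv}\{k_i^n\}$ into a genuine element $\tilde c_n\in F(c_n)$ close to $c_n$; without convexity the combination $\sum_i\lambda_i y_i$ would only sit in $\operatorname{conv} F(c_n)$, and the closed graph argument would no longer yield $c_0\in F(c_0)$ in the limit.
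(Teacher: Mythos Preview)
The paper does not prove this theorem; it merely quotes it from \cite[Chapter~2, Section~5.8]{GD03} as a tool to be applied later. So there is no ``paper's own proof'' to compare against, and your attempt goes well beyond what the paper does.

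That said, your sketch has a genuine gap at the step where you assert continuity of $f_n$. You correctly observe that closed graph plus compactness gives upper semi-continuity of $F$, and hence only \emph{lower} semi-continuity of $z\mapsto\dist(k_i^n,F(z))$; consequently each $\alpha_i^n$ is only upper semi-continuous. A normalized combination of upper semi-continuous weights is in general \emph{not} continuous. Concretely: take $C=[0,2]$, $F(z)=\{0\}$ for $z<1$ and $F(z)=[0,1]$ for $z\ge 1$; this $F$ satisfies all the hypotheses, but with net points $0$ and $1$ your $f_n$ jumps at $z=1$. Brouwer's theorem then does not apply to $f_n\big|_{S_n}$, and the argument stalls.

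The standard repair is to build the partition of unity on the \emph{domain}, not through $F$: use upper semi-continuity to find, for each $z$ in the compact set $S_n$, an open neighbourhood $U_z$ with $F(U_z)\subset F(z)+B(0,\tfrac{1}{n})$; extract a finite subcover $\{U_{z_1},\dots,U_{z_m}\}$, take a continuous partition of unity $\{\phi_j\}$ subordinate to it, pick $y_j\in F(z_j)$, and set $f_n(z)=\sum_j\phi_j(z)\,y_j$. Now $f_n$ is manifestly continuous, maps $S_n$ into $\operatorname{conv}\{y_j\}\subset S_n$ (after possibly enlarging $S_n$ to contain the $y_j$), and for every $z$ the value $f_n(z)$ lies in $\operatorname{conv}\bigl(F(z)+B(0,\tfrac{1}{n})\bigr)$, so convexity of $F(z)$ gives $\dist(f_n(z),F(z))<\tfrac{1}{n}$. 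From this point your limiting argument via compactness of $K$ and the closed graph goes through unchanged.
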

We say a set-valued mapping $F : C \mapsto \mathcal{P}(C)$ has \emph{closed graph} provided that the set $\left\{ \left(x,y\right):y\in F\left(x\right)\right\} $ is closed in $X \times Y$ with the product topology or that, equivalently, for any sequences $x_n \to x$ and $y_n \to y$ with $y_n \in F(x_n)$ for any $ \ge 1$ it follows that $y \in F(x)$.

The following theorem of Aubin-Lions type is taken from~\cite[Theorem 5.1]{MS22}
\begin{theorem}
\label{thm:auba}
{Let $X,Z$ be two Banach spaces, such that $X'\subset Z'$.
Assume that $f_n:(0,T)\to X$ and $g_n: (0,T)\to X'$}. Moreover assume the following: 
\begin{enumerate}
\item The {\em weak convergence}: for some $s\in [1,\infty]$ we have that $f_n\toweakstar f$ in $L^s(X)$ and $g_n\toweakstar g$ in $L^{s'}(X')$.
\item The {\em approximability-condition} is satisfied: For every $m\in \N$ there exists a $f_{n,m}\in L^s(0,T;X)\cap L^1(0,T;Z)$,
%
such that for every $\varepsilon\in (0,1)$ there exists a $m_\epsilon\in \N$ (depending only on $\varepsilon$) such that 
\[
\norm{f_n-f_{n,m}}_{L^s(0,T;X)}\leq \epsilon\text{ for all } m>m_0
\]
and for every $m\in \N$ there is a $C(m)$ such that
\[
\norm{f_{n,m}}_{L^1(0,T;Z)}\, dt\leq C(m).
\]
Moreover, we assume that for every $m$ there is a function $f_m$, and a subsequence such that $f_{n,m}\toweakstar f_m$ in $L^s(X)$.
\item
The {\em equi-continuity} of $g_n$. We require that there exists an $\alpha\in (0,1]$, {a sequence $A_n$ that is uniformly bounded in $L^1([0,T])$}, such that for every $m\in \N$ that there exist a $C(m)>0$ and an $n_m\in \N$ such that for $\tau>0$ and a.e.\ $t\in [0,T-\tau]$
\[
\sup_{n\geq n_{m}} \absB{\fint_{0}^\tau\skp{g_n(t)-g_n(t+s),f_{n,m}(t)}_{X',X}\, ds} \leq C(\delta)\tau^\alpha(A_n(t)+1).
\]
\item The {\em compactness assumption} is satisfied: $X'\hookrightarrow \hookrightarrow Z'$. More precisely, every uniformly bounded sequence in $X'$ has a strongly converging sub-sequence in $Z'$. 
\end{enumerate}
Then there is a subsequence, such that
\[
\int_0^T\skp{f_n,g_n}_{X,X'}\, dt\to \int_0^T\skp{f,g}_{X,X'}\, dt.
\]
\end{theorem}

\subsection{Weak solutions}
\label{subsection:weak-soln}
We introduce here a notion of weak solution for the coupled system~\eqref{eqn:system}. Assume all the appearing quantities are smooth and let $(\mathbf{q}, \xi) \in V_{T} ^{\eta}$ be a test-function.
By multiplying the fluid equation \eqref{eqn:fluid} by $\bf{q}$ and the shell equation  \eqref{eqn:shell} by $\xi$, after performing the integration by parts, taking into consideration the coupling between $\mathbf{u}$ and $\partial_{t}\eta$ and using Reynolds' transport theorem (see Theorem~\ref{thm:Reynolds}), after
 summing up the results  we arrive at the following:
\begin{definition}\label{def:weak-soln}
We call a pair $(\mathbf{u}, \ \eta)\in V^{\eta}_{S}$  with $\left\Vert \eta\right\Vert _{L_{t,x}^{\infty}}<L$ a \emph{time-periodic weak solution} of the system \eqref{eqn:system} provided that  the following relation holds:

\begin{equation}\label{eqn:weak-form}
\begin{aligned}\int_{0}^{T}\int_{\Omega_{\eta\left(t\right)}}-{\bf u}\cdot\partial_{t}{\bf q}+\frac{\left({\bf u}\cdot\nabla\right){\bf u}\cdot{\bf q}-\left({\bf u}\cdot\nabla\right){\bf q}\cdot{\bf u}}{2}+\nabla{\bf u}:\nabla{\bf q}dxdt & +\\
\int_{0}^{T}\int_{\Gamma_{p}}P{\bf q}\cdot\nu\ dAdt+\int_{0}^{T}\int_{\omega}-\frac{1}{2}\left(\partial_{t}\eta\right)^{2}\xi J_{\eta}dAdt+\int_{0}^{T}\int_{\omega}\partial_{t}\eta\partial_{t}\xi dAdt+\int_{0}^{T}K\left(\eta,\xi\right)dt & =\\
\int_{0}^{T}\int_{\Omega_{\eta\left(t\right)}}{\bf f}\cdot{\bf q}dxdt+\int_{0}^{T}\int_{\omega}g\xi dAdt\quad\text{ for all }\left({\bf q},\xi\right)\in V_{T}^{\eta}
\end{aligned}
\end{equation}
and that the energy inequality 
\begin{equation*}
\frac{d}{dt}E\left(t\right)+\int_{\Omega_{\eta\left(t\right)}}\left|\nabla\mathbf{u}\right|^{2}dx+\int_{\Gamma_{p}}\mathbf{u}\cdot\nu P dA\leq \int_{\Omega_{\eta\left(t\right)}}\mathbf{f}\cdot\mathbf{u}dx+\int_{\omega}g\partial_{t}\eta dA
\end{equation*}
 is satisfied distributionally in time.
\end{definition}
Please notice that in case a smooth weak solution of Definition~\ref{def:weak-soln} possesses sufficient regularity it is a strong solution to \eqref{eqn:system}.

\subsection{A divergence-free extension operator}

In the present paper we need the following modification of the solenoidal extension operator introduced in \cite{MS22} (compare also with \cite{LR14}) adapted to the inflow/outflow setting. 
Its important properties are summarized in the following Proposition~\ref{prop:Solenoidal-extension-o}.
\subsection{Geometric setting}

Recall from Section~\ref{sec:math}
that there exists a maximal $L=L(\Omega)>0$ such that 
for $x\in \setR^n$ with $\text{dist}\left(x,\partial\Omega\right)<L$,   the  following quantities are well defined:
\[
\mathbf{p}\left(x\right):=\mathrm{argmin}_{y\in\partial\Omega}\left|y-x\right|\quad,s\left(x\right):=\begin{cases}
-\left|{\bf p}\left(x\right)-x\right| & x\in\Omega\\
\left|{\bf p}\left(x\right)-x\right| & x\in\mathbb{R}^{3}\setminus\Omega.
\end{cases}
\]
Let us set 
\[S_{L}:=\left\{ x\in\mathbb{R}^{3}:d\left(x,\partial\Omega\right)<L\right\} ,\quad\Omega_L:=\Omega\cup S_{L}\]
and recall that $\kappa=(\kappa_1, \kappa_2)$ represent the principal curvatures of $M$. Then, we have:
\begin{proposition}[Solenoidal extension]
\label{prop:Solenoidal-extension-o}
Let $\delta\in L^\infty(0,T;W^{1,2}(\omega))$ be  such that $\left\Vert \delta\right\Vert _{L_{t,x}^{\infty}}\le L<L_0$.
Then there exists a linear solenoidal extension operator 
\[
\testd: L^1(0,T;W^{1,1}_0(\omega))\to L^{1}(0,T;W^{1,1}(\Omega_L))
\]
 such that  $\divergence{\testd}(\xi)=0$ on $\Omega_\delta$ and 
$(\testd(\xi),\xi)\in V_{T}^{\delta}$ for $\xi \in W^{1,\infty}(0,T;H^2_0(\omega))$.

Moreover, for $q\in [1,\infty]$, $p\in (1,\infty)$ it satisfies the following estimates
\begin{equation}
\begin{aligned}\|\mathcal{F}_{\delta}\left(\xi\right)\|_{L^{q}(0,T;L^{p}(\Omega_L)} & \leq C(L,\kappa)\|\xi\|_{L^{q}(0,T;L^{p}(\omega))}\\
\|\mathcal{F}_{\delta}\left(\xi\right)\|_{L^{q}(0,T;W^{1,p}(\Omega_L))} & \leq C(L,\kappa)\Big(\|\xi\|_{L^{q}(0,T;W^{1,p}(\omega)}+\|\left|\xi\right|\left|\nabla\delta\right|\|_{L^{q}(0,T;L^{p}(\omega)}
\\
&\quad +C(L,\kappa,\norm{\nabla \kappa_1}_\infty,\norm{\nabla \kappa_2}_\infty)\|\xi\|_{L^{q}(0,T;L^{p}(\omega)}\Big)\\
\|\partial_{t}\mathcal{F}_{\delta}\left(\xi\right)\|_{L^{q}(0,T;L^{p}(\Omega_L))} & \leq C(L,\kappa)\Big(\|\partial_t\xi\|_{L^{q}(0,T;L^{p}(\Omega_L))} + \|\left|\xi\right|\left|\partial_t\delta\right|\|_{L^{q}(0,T;L^{p}(\omega)}\Big)
\end{aligned}
\end{equation}
Moreover, the extension has non-trivial boundary values on $\Gamma_p$, which are estimated by the above estimates and the trace theorem.

Further, the dependence on $L$ of the constant $c(\kappa,L)$ is decreasing with $\kappa\to0$. In particular, in the flat case $\kappa=0$ there is no dependence on $L$ in the above estimates.\footnote{The quantity $\kappa$ is defined above Theorem~\ref{thm:main} and is related to the principal curvatures of $M$.} Indeed the restriction of $\norm{\delta}_{L^\infty}$ is reduced to a topological restriction without quantitative impact.

\end{proposition}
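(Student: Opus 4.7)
The plan is to construct $\mathcal{F}_\delta(\xi)$ by first producing an explicit vector field $\mathcal{F}^0_\delta(\xi)$ in the tubular neighborhood $\Omega^L$ of $M$ using signed-distance coordinates, extending it by zero elsewhere, and then correcting the residual divergence on $\Omega_\delta$ by a Bogovski-type operator that produces a compensating flux through $\Gamma_p$. This mirrors the scheme from \cite{MS22} but must be modified to accommodate the curvature of $M$ and the fact that $\mathcal{F}_\delta(\xi)$ necessarily has non-zero normal flux through $\Gamma_p$.

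In the coordinates $x = \phi(y) + s\,\nu(y)$ on $\Omega^L$ with $(y,s)\in \omega \times (-L,L)$ the volume element factors (up to the area element on $M$) as $J(y,s)\,dy\,ds$ with $J(y,s)=(1-s\kappa_1(y))(1-s\kappa_2(y))$; the restriction $\norm{\delta}_\infty \leq L/2$ together with smallness of $L\kappa$ renders $J$ uniformly comparable to $1$. For a pure-normal ansatz $\alpha(y,s)\,\nu(y)$ the choice
\[
\alpha(y,s) := \chi(s-\delta(y))\,\xi(y)\,\frac{J(y,\delta(y))}{J(y,s)},
\]
with a smooth cutoff $\chi$ satisfying $\chi(0)=1$ and $\supp \chi \subset (-L/2,L/2)$, matches the coupling $\alpha(y,\delta(y))=\xi(y)$, is localized in a tubular slab around $M_\delta$, and is purely normal where $\Omega^L$ meets $\Gamma_p$. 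To this I would add a tangential correction $\beta(y,s)$, obtained by integrating a transport equation in $s$ along each normal fibre, chosen so that $\mathcal{F}^0_\delta(\xi):=\alpha\nu+\beta$ has divergence of size controlled by $\kappa\,\norm{\xi}_{L^p(\omega)} + \norm{\xi\,\nabla\delta}_{L^p(\omega)}$; this $\beta$-contribution accounts precisely for the tangential terms produced by differentiating the $J$-factors and the cutoff.

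By the divergence theorem and the preliminary identity $\int_{\partial\Omega_\eta}(f\nu\circ\phi_\eta^{-1})\cdot\nu_\eta\,dA = \int_\omega f J_\eta\,dA$, the total mass of $d_\delta:=\divg\mathcal{F}^0_\delta(\xi)$ equals $\int_\omega \xi J_\delta\,dA$ (since $\mathcal{F}^0_\delta(\xi)$ vanishes on $\Gamma_p\cup\Gamma_D$ by support choice). I would then solve the Bogovski problem $\divg\mathbf{b}=-d_\delta$ on $\Omega_\delta$ with $\mathbf{b}=0$ on $\Gamma_D\cup M_\delta$ and $\mathbf{b}\cdot\tau_i=0$ on $\Gamma_p$; compatibility forces $\int_{\Gamma_p}\mathbf{b}\cdot\nu\,dA = -\int_\omega \xi J_\delta\,dA$, which realizes the announced non-trivial boundary values on $\Gamma_p$. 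Standard theory for this mixed Bogovski problem on the uniformly Lipschitz domain $\Omega_\delta$ yields $\mathbf{b}$ with the usual $L^p$--$W^{1,p}$ continuity, and $\mathcal{F}_\delta(\xi):=\mathcal{F}^0_\delta(\xi)+\mathbf{b}$ is the sought extension. The three norm estimates then follow by direct computation in tubular coordinates: the factor $J(y,\delta)/J(y,s)$ combined with the cutoff gives the $L^p$ bound; $y$-differentiation produces both the $\abs{\xi}\abs{\nabla\delta}$ term and the $\norm{\nabla\kappa_i}_\infty\norm{\xi}_{L^p(\omega)}$ term via $\partial_y J(y,\delta(y))$; $t$-differentiation acts only on $\delta(t,y)$ in the formula for $\alpha$ and produces the $\abs{\xi}\abs{\partial_t\delta}$ term; and $\mathbf{b}$ inherits the same bounds.

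The main obstacle is the quantitative tracking of constants in terms of $\kappa$ and $L$. In the flat case $\kappa=0$ one has $J\equiv 1$, all curvature correction terms and $\nabla\kappa_i$ contributions drop out, the tangential correction $\beta$ reduces to the construction of \cite{our-paper}, the Bogovski correction is absent, and the restriction on $\norm{\delta}_\infty$ is purely topological. In the curved case it is the smallness of $L\kappa$ which keeps $J^{-1}$ bounded and the tangential correction of admissible size, giving rise to the constants $c(L,\kappa)$ and $c(L,\kappa,\norm{\nabla\kappa_i}_\infty)$ in the estimates. A secondary technical point is preserving the purely normal character of $\mathcal{F}_\delta(\xi)$ on $\Gamma_p$, which is arranged by localizing the support of $\chi$ strictly away from $\Gamma_p$ and by imposing the slip condition on $\Gamma_p$ in the mixed Bogovski problem.
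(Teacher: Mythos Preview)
Your strategy is in the right spirit but has a genuine technical gap in where you place the Bogovski correction. You solve $\divg\mathbf{b}=-d_\delta$ on the \emph{variable} domain $\Omega_\delta$ with mixed boundary conditions. With only $\delta\in L^\infty(0,T;W^{1,2}(\omega))$ and $\omega\subset\mathbb{R}^2$, the moving part of $\partial\Omega_\delta$ is merely a $W^{1,2}$ graph, which is neither Lipschitz nor uniformly John in general; the constants in the Bogovski estimates would then depend on $\delta$ in an uncontrolled way rather than only on $L,\kappa$ as claimed. More seriously, your time-derivative estimate for $\mathbf{b}$ would require differentiating the Bogovski operator with respect to the moving domain, and this is not addressed; one cannot simply write $\partial_t\mathbf{b}=\mathrm{Bog}_{\Omega_\delta}(\partial_t d_\delta)$ when the domain itself depends on $t$. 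Both issues obstruct the third estimate in the proposition.

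The paper avoids these problems by two modifications. First, the outflow through $\Gamma_p$ is built directly into the explicit extension: one sets $\tilde{\xi}=-\psi_p\big(\int_\omega \xi J_\delta\,dA\big)\nu$ on $\Gamma_p$ for a fixed bump $\psi_p\in C_0^\infty(\Gamma_p)$ with unit integral, so that the purely normal field
\[
\overline{\mathcal{F}}_\delta(\xi)(x)=e^{(\delta(\phi^{-1}(\mathbf{p}(x)))-s(x))\,\divg(\nu(\mathbf{p}(x)))}\,\tilde{\xi}(\mathbf{p}(x))\,\sigma_L(s(x))\,\nu(\mathbf{p}(x))
\]
already carries the correct flux and is \emph{exactly} divergence-free throughout $S_{L/2}$ (the exponential factor plays the role of your Jacobian ratio $J(y,\delta)/J(y,s)$, but no tangential correction $\beta$ is needed). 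Second, the residual divergence is supported in $S_L\setminus S_{L/2}$ and has zero mean, so the Bogovski correction is solved on the \emph{fixed} domain $\Omega\setminus S_{L/2}$ with homogeneous Dirichlet data; the correction never touches $M_\delta$ or $\Gamma_p$, its constants depend only on the reference geometry, and $\partial_t$ commutes with it. Your Jacobian-ratio ansatz and curvature bookkeeping are essentially equivalent to the paper's exponential, but the placement of the Bogovski step is the decisive point.
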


\begin{proof}
The construction follows along the lines of \cite[Proposition~3.3]{MS22}.
With the coordinates $s(x),\bfp(x)$ introduced above we have that
\[
\nabla s(x)=\partial_{\bn} s(x)\bn=\bn,\text{ and }\nabla \bfp(x)=(\partial_{\tau_i(\bfp(x))}\bfp(x)
)_{i=1,2}.
\]
Further, we find
\[
\abs{\text{div}(\nu(\bfp(x))}\leq c(L,\kappa)
\text{ and }\abs{\nabla\text{div}(\nu(\bfp(x))}\leq c(L,\kappa,\norm{\nabla \kappa_1}_\infty,\norm{\nabla \kappa_2}_\infty)\text{ for }x\in S_L.
\]
Let us fix $L<L_1<L_0$ and a cut-off function 
\begin{equation}
\sigma_{L}\in C_{0}^{\infty}\left(\mathbb{R};\left[0,1\right]\right),\ \sigma_{L}\left(s\right)=\begin{cases}
1, & \left|s\right|\le L\\
0, & \left|s\right|\geq L_1
\end{cases}
\end{equation}
For $\xi \in H_0^{2}(\omega)$ we define its extension $\tilde{\xi}$ to $\partial \Omega$ through
\[\tilde{\xi}:=\begin{cases}
\left(\xi\nu\right)\circ\phi_{\delta}^{-1} & \text{on}\ M\\
-\psi_{p}\int_{\omega}\xi J_{\delta}dA\cdot\nu & \text{on}\ \Gamma_{p}\\
0 & \text{on}\ \Gamma_{D}
\end{cases}
\]
where $\psi_{p}\in C_{0}^{\infty}\left(\Gamma_{p}\right),\ \int_{\Gamma_{p}}\psi_{p}dA=1$.
Now we can set
\begin{equation}\label{eqn:def-F-delta}
\overline{\test}(\xi)(x):=e^{(\eta(\phi^{-1}(\bfp(x))-s(x)))\divg(\bn(\bfp(x))}\tilde{\xi}(\bfp(x))\sigma_L(s(x))\bn(\bfp(x)).
\end{equation}
Please note that for $x\in \partial\Omega_\eta$ we have that
\[
\overline{\test}(\xi)(x)=\bn(\bfp(x))\xi( y(x)),\;{x\in\partial\Omega_{\eta}}.
\] 
Then, for $x \in S_{L}$
 we have, following the same computations as in \cite[Proposition 3.3]{MS22} that
  \[
\text{div}_{x}\overline{\mathcal{F}}_{\delta}\left(\xi\right)=0\]
On $\Omega\setminus S_{L}$ we find that
\[
\divg(\overline{\test}(\xi)(x)) = e^{(\eta(t, (\phi^{-1}(\bfp(x)))-s(x))\divg(\bn(\bfp(x)))}\tilde{\xi}(\bfp(x))\sigma_L^{\prime}(s(x)),
\]
which has support inside $S_{L_1}$. 
We need to correct $\overline{\mathcal{F}}_{\delta}\left(\xi\right)$ within the set $\Omega\setminus S_{L}$ and by employing the Bogovski operator -see \cite{Bog}- we can solve the problem \[\begin{cases}
\text{div} \ \mathbf{v}=\text{div \ensuremath{\overline{\mathcal{F}}_{\delta}}} & \text{in}\ \Omega\setminus S_{L_1}\\
\mathbf{v}=0 & \text{on}\ \partial\left(\Omega\setminus S_{L_1}\right).
\end{cases}\]
As the compatibility condition 
\[\int_{\partial\left(\Omega\setminus S_{L_1}\right)}\ensuremath{\overline{\mathcal{F}}_{\delta}}\cdot\nu dA=\int_{\left\{ s=-L_1\right\} }\ensuremath{\overline{\mathcal{F}}_{\delta}}\cdot\nu dA=0\]
is fulfilled  because $0=\int_{\partial S_{L_1}}\mathcal{F}_{\delta}\cdot\nu_{\delta}dA=\int_{\left\{ s=-L\right\} }\mathcal{F}_{\delta}\cdot\nu_{\delta}dA+\int_{\left\{ s=L\right\} }\mathcal{F}_{\delta}\cdot\nu_{\delta}dA$ and
\[0=\int_{\partial\left(\Omega\cap S_{L_1}\right)}\mathcal{F}_{\delta}\cdot\nu_{\delta}dA=\int_{\partial\Omega}\mathcal{F}_{\delta}\cdot\nu_{\delta}dA+\int_{\left\{ s=L_1\right\} }\mathcal{F}_{\delta}\cdot\nu_{\delta}dA=\int_{\left\{ s=L\right\} }\mathcal{F}_{\delta}\cdot\nu_{\delta}dA
\]
is satisfied. Thus, we can set 
\[\mathcal{F}_{\delta}\left(\xi\right):=\overline{\mathcal{F}}_{\delta}\left(\xi\right)-\mathbf{v}.\]
We claim that this operator does satisfy all the assumed properties. Indeed, only the required regularity estimates are left to prove and  follow by direct computations as  performed in  details in the proof of Proposition~3.3 in~\cite{MS22}.
\end{proof}

 \section{The decoupled problem}\label{sec:decoupled-problem}
This section is dedicated to {\em formally} show how a-priori estimates can be derived for coupled solutions. All analysis performed here will then be rigorously be performed on the Galerkin level in the next section.

\subsection{Formal derivation of decoupled solutions}
 We start by introducing a suitable way of decoupled solutions. For that let $\delta$ be a given,  \emph{time-periodic} prescribed displacement $\ensuremath{\delta\in L^{\infty}\left(I;W^{1,2}\left(\omega\right)\right)\cap W^{1,\infty}\left(I;L^{2}\left(\omega\right)\right)}$.
Let us now assume  that  the equations~ \eqref{eqn:system} take place in $\Omega_{\delta}$.
We consider a pair of test functions $\left({\bf q},\xi\right)\in V_{T}^{\delta}$.
%
In order to obtain a good energy balance for the decoupled system we may rewrite the time-derivative as
\begin{align}
\begin{aligned}
\label{eq:time}
\int_{\Omega_{\delta\left(t\right)}}\partial_t{\bf u}\cdot{\bf q}dx
&=\frac{d}{dt}\int_{\Omega_{\delta\left(t\right)}}\frac{{\bf u}\cdot{\bf q}}{2}dx+\int_{\Omega_{\delta\left(t\right)}}\frac{\partial_{t}{\bf u}\cdot{\bf q}-{\bf u}\cdot\partial_{t}{\bf q}}{2}dx-\int_{\Gamma_{\delta\left(t\right)}}\frac{{\bf u}\cdot{\bf q}}{2}\left(\partial_{t}\delta\nu\right)\circ\psi_{\delta\left(t\right)}^{-1}dA
\\
&=\frac{d}{dt}\int_{\Omega_{\delta\left(t\right)}}{\bf u}\cdot{\bf q}dx-\int_{\Omega_{\delta\left(t\right)}}{\bf u}\cdot\partial_{t}{\bf q}dx
-\int_{\Gamma_{\delta\left(t\right)}}{\bf u}\cdot{\bf q}\left(\partial_{t}\delta\nu\right)\circ\psi_{\delta\left(t\right)}^{-1}dA
\end{aligned}
\end{align}
The convective term can be rewritten as
\begin{align}
\begin{aligned}
\label{eq:conv}
\int_{\Omega_{\delta\left(t\right)}}\left({\bf u}\cdot\nabla\right){\bf u}\cdot{\bf q}dx
&=\int_{\partial\Omega_{\delta\left(t\right)}}\frac{\left|{\bf u}\right|^{2}{\bf q}\cdot\nu}{2}dA+\int_{\Omega_{\delta\left(t\right)}}\frac{\left({\bf u}\cdot\nabla\right){\bf u}\cdot{\bf q}-\left({\bf u}\cdot\nabla\right){\bf q}\cdot{\bf u}}{2}dx
\end{aligned}
\end{align}
In order to obtain a good coupled solution {\em that does satisfy the energy}, we make the following choice for the {\em weak formulation of the decoupled problem}, where we use the (coupled) boundary conditions. Indeed we will will seek a coupled solution $\left(\mathbf{u},\eta\right)$, that satisfies in an appropriate sense 
\begin{align}
\label{eq:forG}
\begin{aligned}
&\frac{d}{dt}\int_{\Omega_{\delta\left(t\right)}}\frac{{\bf u}\cdot{\bf q}}{2}dx+\int_{\Omega_{\delta\left(t\right)}}\frac{\partial_{t}{\bf u}\cdot{\bf q}-{\bf u}\cdot\partial_{t}{\bf q}}{2}+\frac{\left({\bf u}\cdot\nabla\right){\bf u}\cdot{\bf q}-\left({\bf u}\cdot\nabla\right){\bf q}\cdot{\bf u}}{2}dx
\\
&=\frac{d}{dt}\int_{\Omega_{\delta\left(t\right)}}{\bf u}\cdot{\bf q}dx +\int_{0}^{T}\int_{\Omega_{\delta\left(t\right)}}-{\bf u}\cdot\partial_{t}{\bf q}+\frac{\left({\bf u}\cdot\nabla\right){\bf u}\cdot{\bf q}-\left({\bf u}\cdot\nabla\right){\bf q}\cdot{\bf u}}{2}\, dx-\int_{\omega}\frac{1}{2}\partial_{t}\eta\partial_t\delta\xi J_{\delta}dA
\\
&=-\int_{\Omega_\delta}\nabla{\bf u}:\nabla{\bf q}dx
- \int_{\Gamma_{p}}P{\bf q}\cdot\nu\ dAdt-\int_{\omega}\partial_{t}^2\eta\xi dA-K\left(\eta,\xi\right) 
+\int_{\Omega_{\delta\left(t\right)}}{\bf f}\cdot{\bf q}dx+\int_{\omega}g\xi dA,
\end{aligned}
\end{align}
for all smooth coupled test-functions.

By using the test function $\left({\bf u},\partial_{t}\eta\right)$, applying  Reynolds' transport theorem we obtain (formally)
\begin{equation}\label{eqn:formal-energy-balance}
\frac{d}{dt}E\left(t\right)+\int_{\Omega_{\delta\left(t\right)}}\left|\nabla\mathbf{u}\right|^{2}dx+\int_{\Gamma_{p}}\mathbf{u}\cdot\nu P dA=\int_{\Omega_{\delta\left(t\right)}}\mathbf{f}\cdot\mathbf{u}dx+\int_{M}g\partial_{t}\eta dA
\end{equation}
where we have denoted the energy by
\begin{equation}
E\left(t\right):=\frac{1}{2}\int_{\Omega_{\delta\left(t\right)}}\left|\mathbf{u}\left(t,\cdot\right)\right|^{2}dx+\frac{1}{2}\int_{\omega}\left|\partial_{t}\eta\left(t,\cdot\right)\right|^{2}dA+K\left(\eta\left(t,\cdot\right)\right),\ t\in I.
\end{equation}

This allows to introduce the following weak equation.

\begin{definition} 
\label{def:decoupled}
For $\delta:I\times \omega\to [-L,L]$ and ${\bf v}\in C^1([0,T]\times\omega)$ with $\divergence({\bf v})=0$  we call
\begin{align*}
(\bu,\eta)\in & L^{2}\left(I;H_{\text{div}}^{1}\left(\Omega_{\delta}\right)\right)\cap L^{\infty}\left(I;L^{2}\left(\Omega_{\delta}\right)\right)
\times
L^{\infty}\left(I;H_{0}^{2}\left(\omega\right)\right)\cap W^{1,\infty}\left(I;L^{2}\left(\omega\right)\right),
\end{align*}
such that $\mathbf{u}=0\text{ on }\Gamma_{D}, \  \mathbf{u}\cdot\tau_{1}=\mathbf{u}\cdot\tau_{2}=0\text{ on }\Gamma_{p},\mathbf{u}\circ\phi_{\delta}=\partial_{t}\eta\nu\text{ on }\omega$ a {\em weak solution on $[0,T]$} to the {\em linearized decoupled problem}, if
\begin{equation}\label{eqn:decoupled1}
\begin{aligned}
&\int_{\Omega_{\delta(T)}}\bu(T)\cdot\bfq(T)\, dx-\int_{\Omega_{\delta(0)}}\bu(0)\cdot\bfq(0)\, dx+\int_\omega \partial_t\eta(T)\cdot\xi(T)-\partial_t\eta(0)\xi(0)\, dA
\\
&\quad +\int_{0}^{T}\int_{\Omega_{\delta\left(t\right)}}-{\bf u}\cdot\partial_{t}{\bf q}+\frac{\left({\bf v}\cdot\nabla\right){\bf u}\cdot{\bf q}-\left({\bf v}\cdot\nabla\right){\bf q}\cdot{\bf u}}{2}+\nabla{\bf u}:\nabla{\bf q}dxdt
\\
& \quad + \int_{0}^{T}\int_{\Gamma_{p}}P{\bf q}\cdot\nu\ dAdt-\int_{\omega}\frac{1}{2}\partial_{t}\eta\partial_t\delta\xi J_{\delta}dAdt-\int_{\omega}\partial_{t}\eta\partial_{t}\xi dAdt+K\left(\eta,\xi\right)dt 
\\
&=
\int_{0}^{T}\int_{\Omega_{\delta\left(t\right)}}{\bf f}\cdot{\bf q}dxdt+\int_{\omega}g\xi dAdt
\end{aligned}
\end{equation}
for all $
(\bfq,\xi)\in  H^{1}_0\left(I;H_{\text{div}}^{1}\left(\Omega_{\delta}\right)\right)
\times
L^2\left(I;H_{0}^{2}\left(\omega\right)\right)\cap W^{1,2}(I;L^2(\omega))
$,
such that $\mathbf{q}=0\text{ on }\Gamma_{D}, \  \mathbf{q}\cdot\tau_{1}=\mathbf{q}\cdot\tau_{2}=0\text{ on }\Gamma_{p},\mathbf{q}\circ\phi_{\delta}=\xi\nu\text{ on }\omega$
and that the energy inequality 
\begin{equation}\label{eqn:energy}
\frac{d}{dt}E\left(t\right)+\int_{\Omega_{\delta\left(t\right)}}\left|\nabla\mathbf{u}\right|^{2}dx+\int_{\Gamma_{p}}\mathbf{u}\cdot\nu P dA\leq \int_{\Omega_{\delta\left(t\right)}}\mathbf{f}\cdot\mathbf{u}dx+\int_{\omega}g\partial_{t}\eta dA
\end{equation}
 is satisfied distributionally in time. 

Let $\delta(0)=\delta(T)$. We call the pair $\left({\bf u},\eta\right)\in V_{S}^{\delta}$
 a \emph{time-periodic weak solution} for the  \emph{decoupled problem}  provided that it holds 
 \begin{equation}\label{eqn:decoupled}
\begin{aligned}
&\int_{0}^{T}\int_{\Omega_{\delta\left(t\right)}}-{\bf u}\cdot\partial_{t}{\bf q}+\frac{\left({\bf u}\cdot\nabla\right){\bf u}\cdot{\bf q}-\left({\bf u}\cdot\nabla\right){\bf q}\cdot{\bf u}}{2}+\nabla{\bf u}:\nabla{\bf q}dxdt
\\
& \quad + \int_{0}^{T}\int_{\Gamma_{p}}P{\bf q}\cdot\nu\ dAdt-\int_{\omega}\frac{1}{2}\partial_{t}\eta\partial_t\delta\xi J_{\delta}dAdt-\int_{\omega}\partial_{t}\eta\partial_{t}\xi dAdt+K\left(\eta,\xi\right)dt 
\\
&=
\int_{0}^{T}\int_{\Omega_{\delta\left(t\right)}}{\bf f}\cdot{\bf q}dxdt+\int_{\omega}g\xi dAdt,
\end{aligned}
\end{equation}
for all $(\bfq,\xi)\in V_{T}^{\delta}$ and provided \eqref{eqn:energy} is satisfied.
\end{definition}

\subsection{A-priori estimates} 
Here we present the key estimates, even so we will later perform the below estimate on the Galerkin level, actually weak solution do satisfy the estimates below. We show it here under conditions for a given geometry that is given by $\delta:[0,T]\times \Omega\to [-L,L]$. The dependence on this given function is however made so precise that the same estimate holds for $\eta\equiv\delta$, if the data is small enough.

First we show how the {\em energy estimate} can be used to get some first a-priori estimates, which do however {\em not suffice} to get existence of time-periodic solutions. 
\begin{lemma}
\label{lem:energy}
    There is a constant $c_M$ that depends on $\kappa,L$ and a constant $c$ that depends on  $\Gamma_D,\Gamma_P$, such that for any $\bu\in L^2(0,T;W^{1,2}(\Omega_\delta))$, with $\partial_t\eta(t,x)=\bu(t,\psi_{\delta}(t,x))$ on $\omega$ satisfying
    \begin{equation}\label{eqn:energy-weak}
\int_0^T\int_{\Omega_{\delta\left(t\right)}}\left|\nabla\mathbf{u}\right|^{2}dx+\int_{\Gamma_{p}}\mathbf{u}\cdot\nu P dA\, dt\leq \int_0^T\int_{\Omega_{\delta\left(t\right)}}\mathbf{f}\cdot\mathbf{u}dx+\int_{\omega}g\partial_{t}\eta dA\, dt,
\end{equation}
  then we find 
    \begin{equation}
    \norm{\partial_t\eta}_{L^2_tL_x^2)}+\left\Vert \mathbf{u}\right\Vert _{L_{t}^{2}W_{x}^{1,2}}^{2}\leq c\int_{0}^{T}\int_{\mathbb{R}^{3}}\left|\mathbf{f}\right|^{2}dxdr+c_M\int_{0}^{T}\int_{\omega}\left|g\right|^{2}dAdt+c\int_{0}^{T}\int_{\Gamma_p}P^{2}dAdt.
\end{equation}
\end{lemma}
\begin{proof}
First note that $\mathbf{u}$ vanishes on $\Gamma_D$, which has positive area. Hence we can use Poincar\'{e}'s inequality.
\[
\norm{\bu}_{L^2(\Omega_\delta)}\leq c\norm{\bu}_{W^{1,2}(\Omega_\delta)}
\]
Next observe, that by Lemma \ref{lem:trace}
\[
\norm{\partial_t\eta}_{L^2(\omega)}^2\leq c(\kappa)\norm{\partial_t\eta\circ\phi^{-1}}_{L^2(M)}^2\leq c(\kappa,L) \norm{\bu}_{W^{1,2}(\Omega_\delta)}^2\]
 Further the trace of $\Gamma_P$ can be estimated using the assumption on the smoothness of $\Gamma_p$. Hence we reach at the following \emph{diffusion estimate}
\begin{equation}\label{eqn:difusion-est}
\int_{0}^{T}\norm{\bu}_{W^{1,2}(\Omega_\delta)}^2dt\leq c \int_{0}^{T}\int_{\mathbb{R}^{3}}\left|\mathbf{f}\right|^{2}dxdr+c(\kappa,L)\int_{0}^{T}\int_{\omega}\left|g\right|^{2}dAdt+c\int_{0}^{T}\int_{\Gamma_p}P^{2}dAdt
\end{equation}
and the estimate is shown. 
\end{proof}
Now we proceed with the extra regularity estimate. For that fix $\alpha>0$, such that
\begin{align}
\label{eq:alpha}
\alpha \norm{\eta}_{W^{1,2}_x}^2\leq K(\eta).
\end{align}
Please note, that $\alpha$ can be rather large, if the Lam\'e constants are large.

\begin{proposition}\label{prop:formal-decoupled-estimates}
Consider a positive constant $\tilde{M}$ and a function $\delta:[0,T]\times \omega\to \mathbb{R}$, such that
\begin{equation}\label{eqn:bound-M-delta}
\alpha\left\Vert \delta\right\Vert _{L_{t}^{\infty}W_{x}^{1,2}}^2+\frac{1}{2}\left\Vert \partial_{t}\delta\right\Vert _{L_{t}^{\infty}L_{x}^{2}}^2\leq \tilde{M}^2.
\end{equation} 
Further assume that 
\begin{equation}\label{eqn:bound-L-delta}
    \left\Vert \delta\right\Vert _{L_{t,x}^{\infty}}\le L
\end{equation}
and
\begin{align}
\label{cond1}
 \int_{0}^{T}\int_{\mathbb{R}^{3}}\left|\mathbf{f}\right|^{2}dxdr+c(\kappa,L)\int_{0}^{T}\int_{\omega}\left|g\right|^{2}dAdt+\int_{0}^{T}\int_{\Gamma_p}P^{2}dAdt\leq \tilde{C}.
\end{align}
Then the following holds for the time-periodic weak solution of the decoupled problem according to Definition~\ref{def:decoupled}:
\begin{enumerate}[(a)]
\item There are constants $c$ depending only on $\Gamma_D, \Gamma_P$ and $c_M$ depending on $\kappa$ and $L$, such that for any $\theta\in (0,1)$
\begin{equation}
\label{eq:est1}
    \sup_{t\in I}E\left(t\right)\le \frac{c}{\theta(T-\theta)}\frac{1}{\theta}\left(\tilde{C}^{2}c_M^2+\tilde{C}c_M\right)+ \frac{T^2+T+1}{T(T-\theta)}\tilde{C} + \frac{\theta}{T-\theta} \tilde{M}^2,
\end{equation}
\item If $\tilde{C}$ satisfies 
\begin{align}
\label{cond2}
\frac{c}{\theta(1-\theta)}\frac{1}{\theta}\left(\tilde{C}^{2}c_M^2+\tilde{C}c_M\right) + \frac{T^2+T+1}{T(1-\theta)}\tilde{C}=\tilde{M}^2,
\end{align}
for some $\theta\in (0,1)$, then 
\[
\sup_{t\in I}E\left(t\right)\le \tilde{M}^2.
\]
\end{enumerate}

%
%
%
%
\end{proposition}

\begin{proof}
Note that Lemma \ref{lem:energy} implies that
\begin{equation}
   \norm{\partial_t\eta}_{L^2_tL_x^2} ^{2} + \left\Vert \mathbf{u}\right\Vert _{L_{t}^{2}W_{x}^{1,2}}^{2}\leq  
   c\tilde{C}.
\end{equation}
From the mean value theorem there is $t_0 \in [0,T]$ for which
\[E\left(t_{0}\right)=\fint_{0}^{T}E\left(t\right)dt\]
and  by integrating the equation~\eqref{eqn:formal-energy-balance} from $t_0$ to $t$ and using \eqref{eqn:difusion-est}  we obtain
\begin{equation}\label{eqn:mean-K}
\sup_{0\le t\le T}E\left(t\right)\lesssim\fint_{0}^{T}E\left(t\right)dt+\tilde{C}.
\end{equation}
But the diffusion estimate~\eqref{eqn:difusion-est} ensures
\begin{equation}
\begin{aligned}\fint_{0}^{T}E\left(t\right)dt & \le\fint_{0}^{T}\int_{\Omega_{\delta\left(t\right)}}\left|\mathbf{u}\left(t,\cdot\right)\right|^{2}dxdt+\int_{\omega}\left|\partial_{t}\eta\left(t,\cdot\right)\right|^{2}dAdt+K\left(\eta\left(t,\cdot\right)\right)dt\\
 & \lesssim\frac{1}{T}\tilde{C}+\fint_{0}^{T}K\left(\eta\left(t,\cdot\right)\right)dt.
\end{aligned}
\end{equation}
and \eqref{eqn:mean-K} reads now
\begin{equation}\label{eqn:sup-t-E-part1}
\sup_{0\le t\lesssim T}E\left(t\right)\le\Big(1+\frac{1}{T}\Big)\tilde{C}+\fint_{0}^{T}K\left(\eta\left(t,\cdot\right)\right)
\end{equation}
In order to estimate the Koiter  term $\fint_{0}^{T}K\left(\eta\left(t,\cdot\right)\right)$ 
 we  consider  the test-function
  $\left(\mathcal{F}_{\delta}\left(\eta\right),\eta\right)\in V_{T}^{\delta}$. 
In the following we will show that for every $\theta\in (0,1)$ there is some constant $c\left(\theta,\Omega,L,T,\mathbf{f},g,P\right)$ such that
\begin{equation}\label{eqn:claim-est-K}
\int_{0}^{T}K\left(\eta\left(t\right)\right)dt\le \theta\sup_{0\le t\le T}E\left(t\right)+c\left(\Omega,L,T,\mathbf{f},g,P\right)+\theta M^2.
\end{equation}
Indeed, by  testing with   $\left(\mathcal{F}_{\delta}\left(\eta\right),\eta\right)\in V_{T}^{\delta}$ in \eqref{eqn:decoupled} we obtain
\begin{equation}
\begin{aligned}2\int_{0}^{T}K\left(\eta\left(t\right)\right)dt\le & \int_{0}^{T}\int_{\omega}\left|\partial_{t}\eta\right|^{2}dAdt+\int_{0}^{T}\int_{\omega}g\eta dAdt
+\int_{0}^{T}\int_{\Omega_{\delta\left(t\right)}}\mathbf{u}\cdot\partial_{t}\mathcal{F_{\delta}}\left(\eta\right)dxdt 
\\
 & 
 - \int_{0}^{T}\int_{\Omega_{\delta\left(t\right)}} \nabla\mathbf{u}:\nabla\mathcal{F_{\delta}}\left(\eta\right)\, dxdt
 +\int_{0}^{T}\int_{\Omega_{\delta\left(t\right)}}\left(\mathbf{u}\cdot\nabla\right)\mathcal{F_{\delta}}\left(\eta\right)\cdot\mathbf{u}dxdt\\
 & +\int_{0}^{T}\int_{\Omega_{\delta\left(t\right)}}\mathbf{f}\cdot\mathcal{F_{\delta}}\left(\eta\right)dx+\int_{\Gamma_{p}}P\mathcal{F_{\delta}}\left(\eta\right)\cdot\nu\,dAdt
  =:\sum_{k=1}^{7}I_{k}
\end{aligned}
\end{equation}
Now, for an arbitrary $\theta>0$ we obtain using H\"older's inequality and \eqref{eq:alpha} that
\begin{equation}\label{eqn:I1,I2,I3}
    \begin{aligned}\begin{aligned}\left|I_{1}\right|+\left|I_{2}\right|\end{aligned}
 & \lesssim \tilde{C}+\left\Vert g\right\Vert _{L_{t}^{2}L_{x}^{2}}\left\Vert \eta\right\Vert _{L_{t}^{2}L_{x}^{2}}
 \\
 & \lesssim \tilde{C}+\tilde{C}^{1/2}\left(\sup_{t\in I}E\left(t\right)\right)^{1/2}
\end{aligned}
\end{equation}
Recalling the properties of the operator $\mathcal{F}_{\delta}$ (see Proposition~\ref{prop:Solenoidal-extension-o}) we can estimate the further terms as follows:
\begin{equation}\label{eqn:I8,I9}
    \begin{aligned}\left|I_{6}\right|+\left|I_{7}\right| 
    & \lesssim
    \left\Vert \mathbf{f}\right\Vert _{L_{t}^{2}L_{x}^{2}}\left\Vert \mathcal{F}_{\delta}\left(\eta\right)\right\Vert _{L_{t}^{2}L_{x}^{2}}+\left\Vert P\right\Vert _{L_{t}^{2}L_{x}^{2}}\left\Vert \mathcal{F}_{\delta}\left(\eta\right)\right\Vert _{L_{t}^{2}W_{x}^{1,2}}
    \\
 & \lesssim (\tilde{C})^{1/2}\left\Vert \mathcal{F}_{\delta}\left(\eta\right)\right\Vert _{L_{t}^{2}W_{x}^{1,2}}\\
 & \lesssim\tilde{C}^{1/2}\left(\left\Vert \eta\right\Vert _{L_{t}^{2}W_{x}^{1,2}}+\left\Vert \eta\nabla\delta\right\Vert _{L_{t}^{2}L_{x}^{2}}\right)\\
 & \lesssim\tilde{C}^{1/2} c(L,\kappa)\Big(\left(\sup_{t\in I}E\left(t\right)\right)^{1/2}+\tilde{M}\Big)\\
 & \lesssim\theta\left(\sup_{t\in I}E\left(t\right)+M^2\right)+\frac{4}{\theta}\left(\tilde{C}+\tilde{C}^2\right)c^2(L,\kappa)
\end{aligned}
\end{equation}
and then the most tedious parts as follows:
\begin{equation}\label{eqn:I4,I5,I6,I7}
\begin{aligned}\left|I_{3}\right|+\left|I_{4}\right|+\left|I_{5}\right|
\le & \left\Vert \mathbf{u}\right\Vert _{L_{t}^{2}L_{x}^{2}}\left\Vert \partial_{t}\mathcal{F}_{\delta}\left(\eta\right)\right\Vert _{L_{t}^{2}L_{x}^{2}}+\left\Vert \mathbf{u}\right\Vert _{L_{t}^{2}W_{x}^{1,2}}\left\Vert \mathcal{F}_{\delta}\left(\eta\right)\right\Vert _{L_{t}^{2}W_{x}^{1,2}}
\\
 &+ \left\Vert \left|\mathbf{u}\right|^{2}\right\Vert _{L_{t}^{1}L_{x}^{3}}\left\Vert \mathcal{F}_{\delta}\left(\eta\right)\right\Vert _{L_{t}^{\infty}W_{x}^{1,3/2}}
 \\
\lesssim & \tilde{C}^{1/2}\left(\left\Vert \partial_{t}\mathcal{F}_{\delta}\left(\eta\right)\right\Vert _{L_{t}^{2}L_{x}^{2}}
+\left\Vert \mathcal{F}_{\delta}\left(\eta\right)\right\Vert _{L_{t}^{\infty}W_{x}^{1,2}}\right)
\\
& + \tilde{C}\left\Vert \mathcal{F}_{\delta}\left(\eta\right)\right\Vert _{L_{t}^{\infty}W_{x}^{1,3/2}}
\\
\lesssim & \tilde{C}^{1/2}c(L,\kappa)\left(\left\Vert \partial_{t}\eta\right\Vert _{L_{t}^{2}L_{x}^{2}}+\left\Vert \eta\partial_{t}\delta\right\Vert _{L_{t}^{2}L_{x}^{2}}+\left\Vert \eta\right\Vert _{L_{t}^{\infty}W_{x}^{1,2}}+\left\Vert \eta\nabla\delta\right\Vert _{L_{t}^{\infty}L_{x}^{2}}\right)
\\
&\quad +\tilde{C}c(L,\kappa_1,\kappa_2)\Big(\left\Vert \eta\right\Vert _{L_{t}^{\infty}W_{x}^{1,3/2}}+\left\Vert \eta\nabla\delta\right\Vert _{L_{t}^{\infty}L_{x}^{3/2}}\Big)
\\
\lesssim & \frac{1}{\theta}\left(\tilde{C}^{2}c(L,\kappa)^2+\tilde{C}c(L,\kappa_1,\kappa_2)\right)
+\theta \left(\sup_{t\in I}E\left(t\right)+\tilde{M}^2\right)
\end{aligned}
\end{equation}
Recall that  $\tilde{M}$ was introduced in \eqref{eqn:bound-M-delta}. By combining \eqref{eqn:I1,I2,I3}, \eqref{eqn:I8,I9}  we obtain that 
\begin{equation}
\label{eq:R1}
\int_{0}^{T}K\left(\eta\right)dt\le \frac{1}{\theta}\left(\tilde{C}^{2}c(L,\kappa)^2+\tilde{C}c(L,\kappa_1,\kappa_2)\right)
+\theta \left(\sup_{t\in I}E\left(t\right)+\tilde{M}^2\right)+\tilde{C}
\end{equation}
which proves the claim~\eqref{eqn:claim-est-K}.
Now \eqref{eqn:sup-t-E-part1}implies, for a sufficiently small $\theta>0$ and $C\left(\mathbf{f},g,P\right)\leq \tilde{C}$ as follows
\begin{equation}\label{eqn:R}
\sup_{t\in I}E\left(t\right)
\le \frac{1}{T\theta}\left(
\tilde{C}^2+\tilde{C}\right)c(L,\kappa)^2+\frac{\theta}{T}\left(\sup_{t\in I}E\left(t\right)+\tilde{M}^2\right)
+\frac{T^2+T+1}{T^2}\tilde{C}.
\end{equation}
%
Which implies \eqref{eq:est1}
which finishes (a). Finally (b) just follows by calculations.
\end{proof}
The next corollary shows how to close the circle and get a uniform estimate, provided the given data is small enough.
\begin{corollary}
\label{cor}
There exist $\tilde{M}>0$, such that for all $\eta:[0,T]\times \omega\to \mathbb{R}$ be given, such that $\text{ess sup}_{t\in [0,T]}K(\eta(t))\leq \tilde{M}^2$, we find $\norm{\eta}_\infty\leq L$.

Further, if $\tilde{C}$ satisfies \eqref{cond1}, $\delta$ satisfies \eqref{eqn:bound-M-delta} and \eqref{eqn:bound-L-delta} and
 $f,g,P$ satisfy \eqref{cond2} we find for the weak solution $(\eta,{\bf u})$ of Definition~\ref{def:decoupled} that by \eqref{eq:alpha}
\[
\alpha\left\Vert \eta\right\Vert _{L_{t}^{\infty}W_{x}^{1,2}}^2+\frac{1}{2}\left\Vert \partial_{t}\eta\right\Vert _{L_{t}^{2}L_{x}^{2}}^2\leq \text{ess sup}_{t\in [0,T]} E\left(t\right)\le \tilde{M}^2,
\]
which implies
\[
\text{ess sup}_{t\in [0,T]}K(\eta(t))\leq \tilde{M}^2\text{ and } \norm{\eta}_\infty\leq L.
\]
\end{corollary}
\begin{proof}
By Sobolev embedding we find using the zero trace of $\eta$ in $\partial \omega$ that for a constant $c$ depending on $\omega$ 
\[
\norm{\eta(t)}_{L^\infty_x}^2\leq c \norm{\eta(t)}_{H^2_x}^2\leq \frac{c}{c_0} K(\eta(t))\leq L,
\]
if $\tilde{M}$ is small enough.
This implies the first claim. The second claim now follows directly from Proposition~\ref{prop:formal-decoupled-estimates}.
\end{proof}
Finally we need the following statement for the case that the solution is {\em not time-periodic}
\begin{corollary}
\label{cor:gal}
Assume that \eqref{eqn:bound-M-delta}--\eqref{cond1} are satisfied. Let $(\bu,\eta)$ be a {\em weak solution on $[0,T]$} in the sense of Definition~\ref{def:decoupled}, satisfying \eqref{eqn:energy-weak}, then
\[
\sup_{t\in I}E\left(t\right)+\norm{\partial_t\eta}_{L^2_tL_x^2} ^{2}+\left\Vert \mathbf{u}\right\Vert _{L_{t}^{2}W_{x}^{1,2}}^{2}\leq \hat{C},
\]
with $\hat{C}$ depending on $\kappa,L,f,g,P,\Gamma_p,\Gamma_D,\abs{M},\tilde{M},T,{\bf v}$.
\end{corollary}
\begin{proof}
First the dissipation terms are directly estimated by Lemma~\ref{lem:energy}. In order to estimate the energy $E$ we follow Proposition~\ref{prop:formal-decoupled-estimates} with respective dependencies of ${\bf v}$. The main difference is that when taking $(\left(\mathcal{F}_{\delta}\left(\eta\right),\eta\right)$ as a test function, boundary terms do appear. This is circumvented by integrating from $[a,T-a]$ where $a\in [0,\frac{T}{4}]$. Applying the estimates of Proposition~\ref{prop:Solenoidal-extension-o}, namely up to \eqref{eq:R1} implies
\begin{align*}
\int_{a}^{T-a}K\left(\eta\right)dt
&\le C
+\theta \sup_{t\in I}E\left(t\right)
\\
&\quad +\int_{\Omega_{\delta(a)}}\bu(a)\cdot\Fcal_{\delta}(\eta)(a)\, dx-\int_{\Omega_{\delta(T-a)}}\bu(T-a)\cdot\Fcal_{\delta}(\eta)(T-A)\, dx\\
&\quad +\int_\omega \partial_t\eta(a)\cdot\eta(a)-\partial_t\eta(T-a)\eta(T-a)\, dA,
\end{align*}
where here and in the following we use $C$ as a constant depending on $\kappa,L,f,g,P,\Gamma_p,\Gamma_D,\abs{M},\tilde{M},T$.
Now we integrate $a$ over $[0,\frac{T}{4}]$, to find
\begin{align*}
\int_{T/4}^{3T/4}K\left(\eta\right)dt
&\le \fint_0^{T/4}\int_{a}^{T-a}K\left(\eta\right)dt\,da
\le C+\theta\sup_{t\in I}E(t)
 +\fint_0^{T/4}\int_{\Omega_{\delta(a)}}\bu(a)\cdot\Fcal_{\delta}(\eta)(a)\, dx
 \\
 &\quad -\int_{\Omega_{\delta(T-a)}}\bu(T-a)\cdot\Fcal_{\delta}(\eta)(T-A)\, dx\, da
 \\
 &\quad +\fint_0^{T/4}\int_\omega \partial_t\eta(a)\cdot\eta(a)-\partial_t\eta(T-a)\eta(T-a)\, dA\, da,
\end{align*}
Now Young's inequality, the dissipation estimate and the fact, that $\norm{\eta}_{L^2_x}+\norm{\Fcal_\delta(\eta)}_{L^2_x}\leq c(L,\kappa)K(eta)$ 
implies that for any $\theta\in (0,1)$ there is a constant $C$, such that
\[
\fint_{T/4}^{3T/4}K\left(\eta\right)dt
\le \frac{2}{T}\fint_0^{T/4}\int_{a}^{T-a}K\left(\eta\right)dt\,da
\le C+\theta \sup_{t\in I}E\left(t\right).
\]
But now by the mean value theorem, there is a $t_0\in[\frac{T}{4},\frac{3T}{4}]$, such that $E(t_0)=\fint_{T/4}^{3T/4}E\left(\eta\right)dt$. But now \eqref{eqn:energy} and the diffusive estimate implies that there is a $C$ such that
\[
\sup_{t\in I}E\left(t\right)\leq E(t_0)+C\leq C+\theta \sup_{t\in I}E\left(t\right),
\] 
which finishes the estimate for $\theta \le \frac{1}{2}$.
\end{proof}
\section{Proof of the main results}\label{sec:Proof-main-thm}
The existence of a solution follows,  by considering the limit of a Galerkin approximations. Note that here $\eta$ is both part of the domain $\Omega_{\eta}$ and of the solution of the shell equation, so it is necessary to decouple the problem, that is to consider domains $\Omega_{\delta}$ with $\delta$ satisfying the condition \eqref{eqn:bound-M-delta} as in Section~\ref{sec:decoupled-problem}.  
 The proof is split into several parts. In particular two fixed points are performed.
 First, the existence of a discrete time-periodic  solution $\left(\mathbf{u}_{n},\eta_{n}\right)$ of the system is showed, by a fixed-point procedure.
 In a second step, we employ a set-valued fixed-point argument of the form $\delta_{n}\mapsto\left\{ \eta_{n}:\eta_{n}\ \text{is a time-periodic weak solution }\right\} $ and  we recover the coupling. 
 The last  part consists in passing to the limit in the weak-formulation as $n\to\infty$.
Here, as common,  proving that $\left(\mathbf{u}_{n},\partial_{t}\eta_{n}\right)\to\left(\mathbf{u},\partial_{t}\eta\right)$ \emph{strongly} in $L^2$ is the main difficulty. In order to pass with the Galerkin basis {\em that depends on the solution} to the limit, we need to include a stabilization term. Hence we will show that the existence of a weak solution in the following sense.
\begin{definition}[Stabilized solution]\label{def:weak-soln-eps}
We call a pair $(\mathbf{u}, \ \eta)\in V^{\eta}_{S}$  with $\left\Vert \eta\right\Vert _{L_{t,x}^{\infty}}<L$ and $\partial_t\eta\in L^2_t H^3_x$ a \emph{stabilized time-periodic weak solution} provided that  the following relation holds:

\begin{equation}\label{eqn:epsilonlevel}
\begin{aligned}\int_{0}^{T}\int_{\Omega_{\eta\left(t\right)}}-\mathbf{u}\cdot\partial_{t}\mathbf{q}+\nabla\mathbf{u}:\nabla\mathbf{q}+\frac{1}{2}\left(\mathbf{u}\cdot\nabla\right)\mathbf{u}\cdot\mathbf{q}-\frac{1}{2}\left(\mathbf{u}\cdot\nabla\right)\mathbf{q}\cdot\mathbf{u}dxdt & +\\
\int_{0}^{T}\int_{\omega}\varepsilon\nabla^{3}\partial_{t}\eta\nabla^{3}\xi-\partial_{t}\eta\partial_{t}\xi dAdt+\int_{0}^{T}K\left(\eta,\xi\right)dt+\int_{0}^{T}\int_{\Gamma_{p}}P\mathbf{q}\cdot\nu\,dAdt & =\\
\int_{0}^{T}\int_{\Omega_{\eta(t)}}\mathbf{f}\cdot\mathbf{q}dxdt+\int_{0}^{T}\int_{\omega}g\xi dAdt\quad\text{ for all }\left(\mathbf{q},\xi\right)\in V_{T}^{\eta}
\end{aligned}
\end{equation}
and that the energy inequality 
\begin{equation*}
\frac{d}{dt}E\left(t\right)+\int_{\Omega_{\eta\left(t\right)}}\left|\nabla\mathbf{u}\right|^{2}dx+\int_{\Gamma_{p}}\mathbf{u}\cdot\nu P dA\leq \int_{\Omega_{\eta\left(t\right)}}\mathbf{f}\cdot\mathbf{u}dx+\int_{\omega}g\partial_{t}\eta dA
\end{equation*}
 is satisfied distributionally in time.
\end{definition}
Accordingly we proof the following theorem, which then implies Theorem \ref{thm:main} by passing with $\varepsilon\to 0$. The construction theory can directly be adapted from \cite{MS22} or \cite{LR14}.

\begin{theorem}[Existence of a stabilized periodic solution]
\label{thm:epsilon}
    There exists a constant $\tilde{C}$ depending on $\Gamma_p,\Gamma_D,\kappa,L,\abs{M}$ and $c_0$ such that if
$\left(\mathbf{f},g,P\right)\in L_{\text{per}}^{2}\left(I;L^2(\mathbb{R}^{3})\right)\times L_{\text{per}}^{2}\left(I;L^2(\omega)\right)\times L_{\text{per}}^{2}\left(I;L^2(\Gamma_p)\right)$ satisfies
\begin{equation}
\left\Vert \mathbf{f}\right\Vert _{L_{t}^{2}L_{x}^{2}}+\left\Vert g\right\Vert _{L_{t}^{2}L_{x}^{2}}+\left\Vert P\right\Vert _{L_{t}^{2}L_{x}^{2}}\le\tilde{C}
\end{equation}
then there exists at least one weak time-periodic solution $\left(\mathbf{u},\eta\right)$ as it is defined in Definition~\ref{def:weak-soln-eps}. 
Furthermore, it enjoys the \emph{diffusion estimate}
\begin{align*}
\frac{1}{2}\int_{0}^{T}\int_{\Omega_{\eta\left(t\right)}}\left|\nabla\mathbf{u}\right|^{2}\,dx+\varepsilon\int_\omega \abs{\nabla^2\partial_t\eta}\, dA\, dt\leq\int_{0}^{T}\int_{\Omega_{\eta\left(t\right)}}\mathbf{f}\cdot{\bf u}\,dxdt+\int_{\omega}g\partial_{t}\eta\,dAdt+\int_{\Gamma_{p}}P{\bf u}\cdot\nu\,dAdt
\end{align*}
and the {\em additional regularity estimate}
\begin{equation*}
\sup_{t\in I}E\left(t\right)+\left\Vert \mathbf{u}\right\Vert _{L_{t}^{2}W_{x}^{1,2}}^{2}+\left\Vert \mathbf{\partial_t\eta}\right\Vert _{L_{t}^{2}W_{x}^{2,2}}^{2}\leq C,
\end{equation*}
with $C$ depending on $\tilde{C}$, $\Gamma_p,\Gamma_D,\kappa_1,\kappa_2,L,\abs{M},c_0$ and $T$,
where we denote again
\begin{equation}
    E\left(t\right):=\frac{1}{2}\int_{\Omega_{\eta\left(t\right)}}\left|\mathbf{u}\right|^{2}dx+\frac{1}{2}\int_{\omega}\left|\partial_{t}\eta\right|^{2}dA+K\left(\eta\left(t,\cdot\right)\right),\quad t\in I.
\end{equation}
\end{theorem}
The existence follows in several steps. 
\begin{enumerate}
    \item[Step 1]For the decoupled problem a Galerkin solution is constructed, for the {\em Cauchy problem}, with arbitrary initial data. Here we introduce a special basis that allows to perform the wished for a-priori estimates on the Galerkin level.
    \item[Step 2]A first fixed point is perforemd to optain a time-periodic solution to the decoupled problem.
    \item[Step 3]
    The additional regularity estimate for the decoupled problem is obtained relying on Section \ref{sec:decoupled-problem}.
    \item[Step 4] Another fixed point is producing the coupled discrete time-periodic solution.
    \item[Step 5] The Galerkin limit is taken, showing in particular the $L^2$ compactness of the fluid and solid velocity.
\end{enumerate}

\subsection{Existence of a discrete time-periodic solution}
\subsubsection*{Construction of the basis}
The construction of the basis is performed in two steps. First we extend a basis for the shell equation into the moving fluid domain. Second we construct a basis for the fluid with zero trace. As long as the geometry of the fluid domain is {\em decoupled} of the shell equation, this part of the basis is only effecting the fluid equation. In the following we use non-bold fold letters for the basis functions used in to discretize the shell equation and bold-fold capital letters as basis functions for the fluid velocity.

We start by introducing a suitable basis for the {\em stabilized shell equation}. We choose the eigenvalues of $(-\Delta)^3$, with Dirichlet boundary values on $\partial \omega$. They span the space
 \[
 H_{0}^{3}\left(\omega\right)=\text{span}\left\{ \xi_{l}:l\in\mathbb{Z},\ l\ge1\right\}, 
 \]
 over which we seek the shell deformation $\eta$.  The basis can assumed to be a subset of $C^\infty(\omega)$, as we assume that $\omega$ has a smooth boundary. For later use, we also remark, that the projection to the discrete basis converges uniformly in $H^2$. This means that for every $\varepsilon>0$, there exists an $n_0$, such that for all $\xi\in H^3_0(\omega)$, there are $e_l$, such that
 \begin{align}
 \label{eq:unif}
 \norm{\xi-\sum_{l=1}^ne_l\xi_l}_{H^2}\leq \varepsilon,
 \end{align}
 for all $n\geq n_0$.
  We extend this basis of the boundary values to the whole domain $\Omega_{\delta}$ (for a smooth $\delta:[0,T]\times \omega\to (L,L)$)  by using  the operator $\mathcal{F}_{\delta}$ constructed in Proposition~\ref{prop:Solenoidal-extension-o}.
We set
 \begin{equation}
 {\bf Y}_{k}\left(t,\cdot\right):=\mathcal{F}_{\delta\left(t\right)}\xi_{k},\ k\ge1.
 \end{equation}
Next, we consider the Hilbert space 
\[
\mathcal{H}:=\left\{ \varphi\in H^{1}\left(\Omega\right)\,:\,\divergence(\mathbf{\varphi})=0,\,\text{tr}_{\Gamma_p}\left(\mathbf{\varphi}\right)\cdot \tau_1=0=\text{tr}_{\Gamma_p}\left(\mathbf{\varphi}\right)\cdot \tau_2=\text{tr}_{\partial\Omega\setminus\Gamma_P}(\mathbf{\varphi})\right\} 
\]
and let $\left(\hat{\mathbf{Z}}_{k}\right)_{k\ge1}$ be a basis of it. 
 We construct such a basis on a steady domain using the eigenvalues of the respective Stokes operator, which are known by Hilbert space theory to be a basis of $\mathcal{H}$ (see for instance \cite[Theorem 7, Appendix E]{evans}). Please note that due to the mixed boundary conditions the basis of $\mathcal{H}$ may only be assumed to be in $H^1$, nevertheless, as it is based on a selfadjoint operator, we find also here for every $\varepsilon>0$,  an $n_0$, such that for all $f\in \mathcal{H}$, there are $e_l$, such that
 \begin{align}
 \label{eq:unif2}
 \norm{f-\sum_{l=1}^ne_l\mathbf{Z}_l}_{L^2}\leq \epsilon,
 \end{align}
 for all $n\geq n_0$.
 For $\delta\in C^1(I;C^\infty(\omega))$ we use the push forward by the Piola transform that conserves solidarity onto 
\[
\mathcal{H}_\delta=\left\{ \varphi\in H^{1}\left(\Omega_\delta\right)\,:\,\divergence(\mathbf{\varphi})=0,\,\text{tr}_{\Gamma_p}\left(\mathbf{\varphi}\right)\cdot \tau_1=0=\text{tr}_{\Gamma_p}\left(\mathbf{\varphi}\right)\cdot \tau_2=\text{tr}_{\partial\Omega_\delta\setminus\Gamma_P}(\mathbf{\varphi})\right\}, 
\]
which is defined as
\[
\mathcal{J}_{\delta\left(t\right)}:\mathcal{H}\to \mathcal{H}_\delta,\quad 
 \mathcal{J}_{\delta\left(t\right)}(\hat{\bf Z})(\phi_\delta(x)) = \frac{\cof \nabla\phi_\delta(x)}{\det \nabla\Phi_\delta(x)}\hat{\bf Z}(x)
 \]
 and set
\[\mathbf{Z}_{k}(y):=\mathcal{J}_{\delta\left(t\right)}\hat{\mathbf{Z}}_{k}(\phi_\delta(x))\text{ for }y=\phi_\delta(x).
\]
We remark here, that $\det \nabla\Phi_\delta>0$ if $\norm{\delta}_\infty\leq L$. If moreover $\nabla\delta\in L^\infty(\omega)$, the Piola transform is continuous between all $L^p$-spaces. If $\delta\in H^2$ and $\norm{\delta}_\infty\leq L$, then there is a loss in the continuity. Indeed, it can be readily checked that
\begin{align}
    \label{eq:piolacont}
    \norm{\mathcal{J}_{\delta\left(t\right)}\hat{f}}_{L^p_x}\leq c\norm{\hat{f}}_{L^q_x}\text{ and }\norm{\mathcal{J}_{\delta\left(t\right)}^{-1}f}_{L^p_x}\leq c\norm{f}_{L^q_x}
\end{align},
for all $q>p\geq 1$.
 Finally, we join the families $\mathbf{Y}_{k}$ and $\mathbf{Z}_k$  into the formula
 \begin{equation}\label{eqn:Xk-def}
 \mathbf{X}_{k}=\begin{cases}
\mathbf{Y}_{k} & k\ \text{odd}\\
\mathbf{Z}_{k} & k\ \text{even}.
\end{cases}
 \end{equation}
Please observe that the corresponding traces
 \begin{equation}
X_{k}\nu_{\delta}:=\text{tr}_{\delta}\left(\mathbf{X}_{k}\right),\quad Y_{k}\nu_{\delta}:=\text{tr}_{\delta}\left(\mathbf{Y}_{k}\right)
 \end{equation}
 which do have the correct form to fit to $\xi_k$ on the moving part of the boundary and are free in normal direction on the steady inflow/outflow part of $\Gamma_p$.
It can be readily checked that the set  of vectors $\left\{ {\bf X}_{k}\right\} _{1\le k\le n}$  is linear  independent and are a subset of $C^1(I,H^1(\Omega_\delta))\times C^\infty(\omega)$.

Let us check that 
\begin{equation}\label{eqn:claim-density}
\text{\text{span}\ensuremath{\left\{ \left(\varphi{\bf X}_{k},\varphi X_{k}\right):k\ge1,\varphi\in C^{1}_{\text{per}}\left(I\right)\right\} \ \text{is dense in}}}\ V_{T}^{\delta}.
\end{equation}
Let  $\left({\bf q},\xi\right)\in V_{T}^{\delta}$. We consider ${\bf q}_0={\bf q}-\mathcal{F}_{\delta\left(t\right)}(\xi)$ and approximate this function which has a zero trace on the time-dependent part, by elements of ${\bf Z}_k$. This can actually best be performed by approximating the inverse of the Piola transform of $\mathcal{J}_{\delta\left(t\right)}{\bf q}_0$ by the elements of $\hat{\mathbf{Z}}_{k}$. The continuity and invertability of the Piola transform then produces a converging approximation. The reminder part $\mathcal{F}_{\delta\left(t\right)}(\xi)$ is approximated via $\xi$. Indeed, for $\varepsilon>0$, 
 there is some $n_0 \ge 1$ and some  functions $e_{k}(t)$,  $1\le k \le n$  for which $\xi_n(t,x):=\sum_{k=1}^{n}e_{i_{k}}(t)Y_{k}(x)$ satisfies
$\left\Vert \xi-\xi_n\right\Vert _{W^{1,2}(0,T;H^{2}\left(\omega\right)}\le\varepsilon$ for all $n\geq n_0$. 
As $\mathcal{F}_{\delta\left(t\right)}$ is linear, we find 
\[
\left\Vert \mathcal{F}_{\delta}(\xi)-\sum_{k=1}^{n}e_{i_{k}}{\bf Y}_{k}\right\Vert_{W^{1,2}(0,T;W^{1,2}(\Omega_\delta))}=\left\Vert \mathcal{F}_{\delta\left(t\right)}\left(\xi-\xi_{n}\right)\right\Vert _{W^{1,2}(0,T;W^{1,2}(\Omega_\delta))}\lesssim\left\Vert \xi-\xi_{n}\right\Vert _{W^{1,2}(0,T;H^2\left(\omega\right))}\le\varepsilon,
 \]
which shows the density. 
\subsubsection{The Galerkin approximation} 
We  make the Galerkin ansatz
\begin{equation}\label{eqn:ansatz-u-n-eta-n}
\begin{aligned}\delta_{n}\left(t,x\right):= & \sum_{k=1}^{n}{\bf b}_{n}^{k}\left(t\right)X_{k}\left(x\right),\quad\left(t,x\right)\in I\times\omega\\
{\bf v}_{n}\left(t,\cdot\right):= & \sum_{k=1}^{n}\left({\bf b}_{n}^{k}\right)^{\prime}\left(t\right){\bf X}_{k}\left(t,\cdot\right),\quad\left(t,x\right)\in\Omega_{\delta}^{I}\\
\eta_{n}\left(t,x\right):= & \sum_{k=1}^{n}{\bf a}_{n}^{k}\left(t\right)X_{k}\left(x\right),\quad\left(t,x\right)\in I\times\omega\\
{\bf u}_{n}\left(t,\cdot\right):= & \sum_{k=1}^{n}\left({\bf a}_{n}^{k}\right)^{\prime}\left(t\right){\bf X}_{k}\left(t,\cdot\right),\quad\left(t,x\right)\in\Omega_{\delta}^{I},
\end{aligned}
\end{equation}
where $\delta_n$ and $\mathbf{v}_n$  are given and  $\eta_n$ and $\mathbf{u}_n$ are to be constructed.
We provide an initial data
 \begin{equation}\label{eqn:cond-initial-value-small}
{\bf b}_{n}\left(0\right)=:{\bf b}_{n0},{\bf b}_{n}^{\prime}\left(0\right)=:{\bf b}_{n1}
 \end{equation}

Please observe that \eqref{eqn:ansatz-u-n-eta-n} also incorporates the  condition
\begin{equation}\label{eqn:coupling-u-n-eta-n}
\text{tr}_{\delta}\left(\mathbf{u}_{n}\right)=\partial_{t}\eta_{n}\nu
\end{equation}
 as the functions $X_k$  depend only on $x$, by construction. 

 The $n^{\text{th}}$-dimensional approximate problem reads as follows:
 
 \begin{equation}\label{eqn:galerkin}
\begin{aligned}
&\frac{d}{dt}\int_{\Omega_{\delta\left(t\right)}}\frac{{\bf u}_{n}\cdot{\bf X}_{k}}{2}dx+\int_{\Omega_{\delta\left(t\right)}}\frac{\partial_{t}{\bf u}_{n}\cdot{\bf X}_{k}-{\bf u}_{n}\cdot\partial_{t}{\bf X}_{k}+\left({\bf v}_{n}\cdot\nabla\right){\bf u}_{n}\cdot{\bf X}_{k}-\left({\bf v}_{n}\cdot\nabla\right){\bf X}_{k}\cdot{\bf u}_{n}}{2} 
\\
&\quad +\int_{\Omega_{\delta\left(t\right)}}\nabla{\bf u}_{n}:\nabla{\bf X}_{k}dx+\int_{\Gamma_{p}}P{\bf X}_{k}\cdot\nu dA+\int_{\omega}\partial_{tt}\eta_{n}X_{k}+\varepsilon\nabla^3\eta_{n}\nabla^3 X_k dA+K\left(\eta_{\eta},X_{k}\right)
\\
&=\int_{\Omega_{\delta\left(t\right)}}{\bf f}\cdot{\bf {\bf X}}_{k}dx+\int_{\omega}gX_{k}dA\quad k=\overline{1,n},\ t\in I & .
\end{aligned}
\end{equation}
Now \eqref{eqn:galerkin} reads as a system of second order integro-differential equations with the unknown\footnote{see also \cite[p. 239]{LR14}} \[{\bf a}\left(t\right):=\left(\mathbf{a}_{n}^{k}\left(t\right)\right)_{k=1}^{n}:I\mapsto\mathbb{R}^{n}.\]
Note that the mass matrix is given by
\[M\left(t\right):=\left(\int_{\Omega_{\delta\left(t\right)}}\mathbf{X}_{i}\left(t\right)\cdot\mathbf{X}_{j}\left(t\right)dx+\int_{\omega}X_{i}X_{j}dA\right)_{1\le i,j\le2},
\]
which is positive definite for each $t\in I$. 
Now,  usual   Picard-Lindel\"{o}f  arguments can be applied to ensure the  local existence of $\mathbf{a}_n\left(t\right),\ t\in\left[0,t_{0}\right]$,
The extension of $\mathbf{a}_n$ to the whole $[0,T]$  is justified by the balance of energy, which is obtained directly by testing with $(\bu_n,\partial_t\eta_n)$ and which reads
\begin{equation}\label{eqn:energy-balance-E-n}
\frac{d}{dt}E_{n}\left(t\right)+\int_{\Omega_{\delta\left(t\right)}}\left|\nabla{\bf u}_{n}\right|^{2}dx+\varepsilon\int_{\omega}\abs{\nabla^3\partial_t\eta_n}^2dA+\int_{\Gamma_{p}}P\mathbf{u}_{n}\cdot\nu dA=\int_{\Omega_{\delta\left(t\right)}}{\bf f}\cdot{\bf u}_{n}dx+\int_{\omega}g\partial_{t}\eta_{n}dA,
\end{equation}
where we have denoted the energy $E_n$ by
\begin{equation}\label{eqn:energy-E-n}
 E_{n}\left(t\right):=\frac{1}{2}\int_{\Omega_{\delta_{n}\left(t\right)}}\left|{\bf u}_{n}\left(t,\cdot\right)\right|^{2}dx+\frac{1}{2}\int_{\omega}\left|\partial_{t}\eta_{n}\left(t,\cdot\right)\right|^{2}dA+K\left(\eta_{n}\left(t,\cdot\right)\right)\quad t\in I
\end{equation}
and thus by the arguments in the proof of Lemma~\ref{lem:energy} we find
\[
\sup_{t\in I}E_{n}\left(t\right)\le E_{n}\left(0\right)+ C,
\]
for some $C$ depending on the data and the existence over the interval $I=[0,T]$ follows.

\subsubsection*{Establishing the periodicity}
Consider the mapping 
\begin{equation}
\mathcal{P}:\mathbb{R}^{2\times n}\mapsto\mathbb{R}^{2\times n},\ \mathcal{P}:\left(\mathbf{a}_{n0},\mathbf{a}_{n1}\right)\mapsto\left(\mathbf{a}_{n}\left(T\right),\mathbf{a}_{n}^{\prime}\left(T\right)\right)
\end{equation}
and note that it is well-defined in view of the above-discussion. We aim to prove that $\mathcal{P}$ has at least one fixed-point and we employ the Leray-Schauder theorem.
Please note that $\mathcal{P}$ is continuous, since the differential system \eqref{eqn:galerkin} enjoys the property that the solutions  are continuous w.r.t. the initial data. It is easy to check that $\mathcal{P}$ is also compact.  Now, we consider the set 
\begin{equation}
    LS:=\left\{ \left(\mathbf{a}_{n0},\mathbf{a}_{n1}\right):\left(\mathbf{a}_{n0},\mathbf{a}_{n1}\right)=\lambda\mathcal{P}\left(\mathbf{a}_{n0},\mathbf{a}_{n1}\right)\ \text{for some }\lambda\in\left[0,1\right]\right\} 
\end{equation}
and we claim that it is uniformly bounded w.r.t. $\lambda$. To this end, let $\left(\mathbf{a}_{n0},\mathbf{a}_{n1}\right)\in LS$. The corresponding solution $\left(\mathbf{u}_{n},\eta_{n}\right)$ enjoys the property 
\begin{equation}
    \left(\mathbf{u}_{n}\left(0\right),\eta_{n}\left(0\right),\partial_{t}\eta_{n}\left(0\right)\right)=\lambda\left(\left(\mathbf{u}_{n}\left(T\right),\eta_{n}\left(T\right),\partial_{t}\eta_{n}\left(T\right)\right)\right)
\end{equation}
for some $\lambda\in\left[0,1\right]$. We exclude the trivial case $\lambda=0$.
This yields 
\begin{equation}
    E_{n}\left(T\right)=\frac{1}{\lambda^{2}}E_{n}\left(0\right)\ge E_{n}\left(0\right).
\end{equation}
Hence \eqref{eqn:energy-balance-E-n} implies that by the argument of Corollary~\ref{cor:gal} we find
\begin{equation}\label{eqn:sup-E-n-t}
    \sup_{t\in I}E_{n}\left(t\right)\le C
\end{equation}
and in particular $E_{n}\left(0\right)\le C$ and the set $LS$ is uniformly bounded. Indeed, we can repeat the proof, as both the energy inequality, as well as the use of the testfunction $(\Fcal_\delta \eta_n,\eta_n)$ is valid on the Galerkin level.
Thus the mapping $\mathcal{P}$ has at least one fixed point, that is there exists an initial data $\left(\mathbf{a}_{n0},\mathbf{a}_{n1}\right)\in\mathbb{R}^{2\times n}$ for which 
\begin{equation}
    \left(\mathbf{u}_{n}\left(0\right),\eta_{n}\left(0\right),\partial_{t}\eta_{n}\left(0\right)\right)=\left(\mathbf{u}_{n}\left(T\right),\eta_{n}\left(T\right),\partial_{t}\eta_{n}\left(T\right)\right).
\end{equation}

\subsection{Recovering the coupling}
\label{sec:fp2}

Consider  $\mathcal{B}_{n}$ a convex subset of $C_{\text{per}}^{1}\left(I,\mathbb{R}^n\right)$ and the operator 
\begin{equation}
\mathcal{S}:\mathcal{B}_n\mapsto 2^{\mathcal{B}_n},\ \mathcal{S}:\mathbf{b}_{n}\mapsto\left\{ \mathbf{a}_{n}:\ \mathbf{a}_{n}\ \text{is a time-periodic solution to \eqref{eqn:galerkin}}\right\} 
\end{equation}
We aim to apply the Glicksberg-Kakutani-Fan fixed point theorem to prove that there exists at least one $\mathbf{b}_{n}\in C_{\text{per}}^{1}\left(I\right)$ for which $\mathbf{b}_{n}\in\mathcal{S}\left(\mathbf{b}_{n}\right)$. This corresponds to a recovering of the coupling condition.

First we specify the set $\mathcal{B}_n$. For that we fix $\tilde{M}>0$, such that 
\begin{align}
\label{eq:condition1}
E(t)\leq \tilde{M}^2\text{ implies }\norm{\eta(t)}_\infty \leq L, 
\end{align}
as mentioned before the size of the Lam\'e constants have a significant impact in the size of $\tilde{M}$. The size of $\tilde{M}$, then implies the size of $\tilde{C}$ by \eqref{cond2}. (Here $\theta\in (0,1)$ can be optimized in accordance with the other fixed constants).

Next, we associate with $\mathbf{b}_n$, the function $\delta_n:=\sum_{k=1}^n b_n^k X_k$ and define
\[
\mathcal{B}_n:=\big\{ \mathbf{b}_n\in C_{\text{per}}^{1}\left(I,\mathbb{R}^n\right)\, :\, \norm{\delta_n}_{L^\infty_{t,x}}\leq L\text{ and } \alpha\left\Vert \delta\right\Vert _{L_{t}^{\infty}W_{x}^{1,2}}^2+\frac{1}{2}\left\Vert \partial_{t}\delta\right\Vert _{L_{t}^{\infty}L_{x}^{2}}^2\leq \tilde{M}^2\big\},
\]
where $\alpha$ is chosen according to \eqref{eq:alpha}. 

\begin{itemize}
    \item The set $\mathcal{B}_n$ is convex.
    \item $\mathcal{S}$ is upper-semicontinuous, which is equivalent to the closed-graph property; this is true as the system \eqref{eqn:galerkin} is linearized.
    \item We have the compact embedding $\mathcal{S}\left(\mathcal{B}_n\right)\subset C_{\text{per}}^{2}\left(I\right)$ which is due to Arzela-Ascoli theorem. Indeed, consider, for fixed $n$, a bounded sequence $\sup_{k}\left\Vert \mathbf{b}_{n_{k}}\right\Vert_{C_{t}^{1}} \le1$. 
    Then, for each $\mathbf{a}_{n_{k}}\in\mathcal{S}\left(\mathbf{b}_{n_{k}}\right)$ the estimate \eqref{eqn:R} implies that $\sup_{k \ge 1}\left\Vert \mathbf{a}_{n_{k}}\right\Vert _{C_{t}^{1}}\le c$. Now,  please observe that if we take $C^2_{t}$ norms in  \eqref{eqn:galerkin} we obtain that  $\sup_{k}\left\Vert \mathbf{a}_{n_{k}}\right\Vert _{C_{t}^{2}}\le C$ where $C$ depends on $n$,  $\max_{1\le i\le n}\left\Vert \left(\mathbf{X}_{i},X_{i}\right)\right\Vert _{C_{t,x}^{1}}$, $\delta_n$ and so on, but not on $k$. The Arzela-Ascoli theorem ensures that the sequence $\left(\mathbf{a}_{n_{k}}\right)_{k}$ has a convergent subsequence in $C^{1} _ {\text{per}}(I)$.

    \item For any $\mathbf{b}_n\in \mathcal{B}_n$ we have that $\mathcal{S}\left(\mathbf{b}_{n}\right)$ is nonempty,  convex (the problem is linearized) and compact (Arzela-Ascoli).
    \item For any $\mathbf{b}_n\in \mathcal{B}_n$ we have that $\mathcal{S}\left(\mathbf{b}_{n}\right)\subset\mathcal{B}_n$, which follows by the arguments in Corollary~\ref{cor} and the above choices of $\tilde{M}$ (in dependence of $L$, $\kappa$ and $\omega$) and $\tilde{C}$ (in dependence of $\tilde{M},L$ and the curvatures and the length of the reference geometry $M$).
\end{itemize}
Thus, by applying the Kakutani-Glicksberg-Fan fixed point theorem we ensure that there exists at least one $\mathbf{a}_{n}\in C_{\text{per}}^{2}\left(I,\mathbb{R}^n\right)\cap \mathcal{B}_n$ for which $\mathbf{a}_{n}\in\mathcal{S}\left(\mathbf{a}_{n}\right)$. This fixed point is the desired coupled solution.

\subsubsection{Letting $n \to \infty$ }
By the energy estimate  it follows that there exists a pair $\left(\mathbf{u},\eta\right)\in V_{S}^{\eta}$ and a  convergent subsequence (not relabeled) for which
\begin{equation}
\begin{aligned}\mathbf{u}_{n}\to\mathbf{u} & \ \text{weakly* in}\ L_{t}^{2}L_{x}^{2}\cap L_{t}^{\infty}L_{x}^{2}\\
\partial_{t}\eta_{n}\to\partial_{t}\eta & \ \text{weakly* in}\ L_{t}^{2}H_{x}^{2}\cap L_{t}^{\infty}L_{x}^{2}
\end{aligned}
\end{equation}
Further, by Arzel\'{a}-Ascoli, we find that 
\[
\eta_n\to \eta\text{ strongly in } C^\alpha([0,T]\times \omega), 
\]
for some $\alpha>0$.
The proof of the \emph{strong convergence} of this subsequence, that is 
 \begin{equation}\label{eqn:strong-conv-u-n}
     \left(\mathbf{u}_{n},\partial_{t}\eta_{n}\right)\to\left(\mathbf{u},\partial_{t}\eta\right)\ \text{in }L_{t}^{2}L_{x}^{2}\times L_{t}^{2}L_{x}^{2}.
 \end{equation}
 is presented in Subsection~\ref{subsection:L2-compact}.
Note that from \eqref{eqn:strong-conv-u-n} and  \eqref{eqn:coupling-u-n-eta-n} we get  
\begin{equation}
\text{tr}_{\eta}\left(\mathbf{u}\right)=\partial_{t}\eta\nu.
\end{equation}
Let us now multiply \eqref{eqn:galerkin} by $\varphi\in C_{\text{per}}^{1}\left(I\right)$ and integrate by parts  to obtain that
 \begin{equation}
 \label{eq:wfd}
\begin{aligned}\int_{0}^{T}\int_{\Omega_{\eta_{n}\left(t\right)}}-{\bf u}_{n}\cdot\partial_{t}\mathbf{q}_{n}+\frac{1}{2}\left(\mathbf{u}_{n}\cdot\nabla\right)\mathbf{u}_{n}\cdot\mathbf{q}_{n}-\frac{1}{2}\left({\bf u}_{n}\cdot\nabla\right)\mathbf{q}_{n}\cdot{\bf u}_{n}+\nabla{\bf u}_{n}:\nabla\mathbf{q}_{n}dxdt & +\\
\int_{0}^{T}\int_{\Gamma_{p}}P{\bf q}_{n}\cdot\nu dA+\int_{0}^{T}\int_{\omega}\varepsilon\nabla^{3}\partial_{t}\eta_{n}\nabla^{3}\xi_{n}-\partial_{t}\eta_{\eta}\partial_{t}\xi_{n}dAdt+2\int_{0}^{T}K\left(\eta_{\eta},\xi_{n}\right)dt & =\\
\int_{0}^{T}\int_{\Omega_{\eta\left(t\right)}}{\bf f}\cdot\mathbf{q}_{n}dxdt+\int_{\omega}g\xi_{n}dAdt
\end{aligned}
\end{equation}
 for all $\left(\mathbf{q}_{n},\xi_{n}\right)\in\text{span}\left\{ \left(\varphi\mathbf{X}_{k},\varphi X_{k}\right):\varphi\in C_{\text{per}}^{1}\left(I\right)1\le k\le n\right\} $.
By letting $n\to \infty$, taking into account \eqref{eqn:claim-density}, we obtain exactly the relation~\eqref{eqn:epsilonlevel}, which concludes Theorem \ref{thm:epsilon}.

\subsection{$L^2$-compactness}\label{subsection:L2-compact}
\label{sec:L2comp}



We follow that approach as in the proof of \cite[Lemma 6.2]{MS22}. For that we split the sequence in two parts for which we will show independently compactness:
\begin{align*}
\int_0^T\norm{\partial_t \eta^{n}}_{L^2(\omega)}^2+\norm{
\bu^{n}}_{L^2(\Omega_{\eta^{n}})}^2\,dt
&=\int_0^T\skp{\partial_t \eta^{n},\partial_t \eta^{n}}+\skp{\bu^{n},\testn(\partial_t \eta^{n})}\, dt
\\
&\quad +\int_0^T\skp{\bu^{n},\bu^{n}-\testn(\partial_t \eta^{n})}\, dt=:(I_n)+(II_n).
\end{align*}
For both terms we will show the convergence separately by using Theorem \ref{thm:auba}.
Their convergence then implies that 
\[
\norm{\partial_t \eta^{n}}_{L^2_tL^2_x}+\norm{\bu^{n}\chi_{\Omega_{\eta^{n}}}}_{L^2_tL^2_x}\to \norm{\partial_t \eta}_{L^2_tL^2_x}+\norm{\bu\chi_{\Omega_{\eta}}}_{L^2_tL^2_x},
\]
which implies, by the uniform convexity of $L^2$, the strong convergence $(\partial_t\eta^{\varepsilon},\bu^{\varepsilon})\to (\eta,\bu)$ and the Lemma is proved.

{\bf For $(I_n)$ we define} $g_n=(\partial_t \eta^{n},\bu^{n}\chi_{\Omega_{\eta^{n}}})$ and $f_n=(\partial_t \eta^{n},\testn(\partial_t \eta^{n})))$, that we may extend naturally to $\Omega_L$ by the extension. For these we apply Theorem~\ref{thm:auba} using the spaces $X={L^2(\omega)\times L^2(\Omega_L)}$ and consequently $X'={L^2(\omega)\times L^2(\Omega_L)}$.  
The space $Z=H^{s_0}(\omega)\times H^{s_0}(S_L)$ for $0<s<s_0<\frac14$. Also with respect to time we work in the setting of Hilbert spaces, which means that all Lebesgue exponents are equal to two. 

Particular is here the choice of the approximation $f_{n,\epsilon}$. For that we extend all ${\bf Y}_k$ to $\Omega_L$, as $\Fcal_{\eta_n}({Y}_k)$ is defined there. Now, by the properties of the basis $Y_k$, we can for each $\epsilon>0$ truncate $\partial_t\eta_n=\sum_{k=1}^n{\bf a}_n^k X_k$ at the level of $m$ and define
\begin{align*}
f_{n,m}:=(\partial_t\eta_{n,m}, \testn \partial_t\eta_{n,m}):=\Big(\sum_{k\leq m} {\bf a}_n^k \xi_k, \sum_{k\leq m} {\bf a}_n^k \testn\xi_k\Big).
\end{align*}
By \eqref{eq:unif} and Proposition \ref{prop:Solenoidal-extension-o} we find an $n_\epsilon$, such that for all $m\geq n_\epsilon$
\begin{align}
\label{eq:2}    
\norm{f-f_{n,m}}_{L^2_t(L^2_x)}\leq \epsilon,\text{ and }
\norm{f_{n,m}}_{W^{1,2}_tW^{1,2}_x}\leq C(m),
\end{align}
as $\eta_n$ is uniformly bounded in $L^\infty_tH^2_x$.


Next, let us check the assumptions of  Theorem~\ref{thm:auba}. First observe that  \cite[Proposition 2.28]{LR14} 
implies that $g_n$ is uniformly bounded in $L^2_t(H^{s}_x)$ (for $s\leq \frac14$).
Hence the assumptions (1) follows in a rather straightforward manner by weak compactness in Hilbert spaces. 
Next 
(2) follows from \eqref{eq:2}.
As (4) is covered by our choices of spaces, we are left to check (3). As usual for equi-continuity in time, this is a consequence of the weak formulation of the problem \eqref{eq:wfd}. For $ \sigma\in [t,t+\tau]$ (using the solenoidality and the matching of the extension) we have 
\begin{align*}
\abs{\skp{g_n(t)-g_n(\sigma),f_{n,m}(t)}}
&=\absBB{\int_{\Omega_L} \Big(\bu_{n}(t)\chi_{\Omega_{\eta_{n}(t)}}-\bu_{n}(\sigma)\chi_{\Omega_{\eta_{n}(\sigma)}}\Big)\cdot \mathcal{F}_{\eta_n(t)}(\partial_t\eta_{n,m}(t))\, dx
\\
&\quad   +\int_{\omega} \big(\partial_t\eta^{n}(t)-\partial_t\eta^{n}(\sigma)\big)\partial_t\eta_{n,m}\, d\by}
\\
&=\absBB{\int_{\Omega_L} \bu_{n}(t)\chi_{\Omega_{\eta_{n}(t)}}\cdot \mathcal{F}_{\eta_n(t)}(\partial_t\eta_{n,m}(t))-\bu_{n}(\sigma)\chi_{\Omega_{\eta_{n}(\sigma)}}\cdot \mathcal{F}_{\eta_n(\sigma)}(\partial_t\eta_{n,m}(t))dx 
\\
&\quad  +\int_{\omega} \big(\partial_t\eta_{n}(t)-\partial_t\eta_{n}(\sigma)\big)\partial_t \eta_{n,m}(t))\, d\by}
\\
&\quad +\absBB{\int_{\Omega_L} \bu_{n}(\sigma)\chi_{\Omega_{\eta_{n}(\sigma)}}\cdot \Big(\mathcal{F}_{\eta_n(t)}(\partial_t\eta_{n,m}(t))-\mathcal{F}_{\eta_n(\sigma)}(\partial_t\eta_{n,m}(t))\Big)dx }
\\
&=: (A_1)+(A_2)
\end{align*}
First observe that by Proposition \ref{prop:Solenoidal-extension-o}
\begin{align*}
(A_2)&\leq \int \abs{\bu_{n}(\sigma)}\chi_{\Omega_{\eta_{n}(\sigma)}}\cdot \int_t^\sigma\abs{\partial_s \mathcal{F}_{\eta_n(s)}(\partial_t\eta_{n,m}(t))}\, ds \, dx 
\\
&\leq c\norm{\bu_{n}(\sigma)}_{L^2(\Omega_{\eta_{n}(\sigma)})}\norm{\partial_t \eta_{n}(\partial_t \eta_{n,m})(t)}_{L^\infty_t(L^2_x)}\abs{\sigma-t}^\frac12
\\
&\leq C(m)\tau^\frac12,
\end{align*}
where we used the uniform $L^\infty_tL^2_x$ estimate of $\bu_{n}$ and $\partial_t \eta_{n}$.

Second, by the discrete-in-space weak formulation \eqref{eq:wfd}, we find
\begin{align*}
(A_1)
&=\absBB{\int_\sigma^t\int_{\Omega_\eta}-\bu_{n}(s)\cdot \partial_s \Fcal_{\eta_{n}(s)}(\partial_t \eta_{n,m}(t))
\\
&+(\nablasym\bu_{n}(s)-\bu_{n}(s) \otimes \bu_{n} (s)) :\nabla  \Fcal_{\eta_{n}(s)}(\partial_t \eta_{n,m}(t))\, dx 
\\
& 
\quad +\int_{\omega}2K(\eta_{n},(\partial_t \eta_{n,m}(t)))dA\, ds}
\\
&\leq C(m)\Big((\norm{\bu_{n}}_{L^2(t,t+\tau;W^{1,2})(\Omega_{\eta_{n}})}(\norm{\partial_t \eta_{n}}_{L^\infty_t(L^2_x)}+\norm{\eta_{n}}_{L^\infty_t(H^2_x)})
\\
&\quad +\norm{\eta_{n}}_{L^2(t,t+\tau;W^{1,\infty}(\omega)}\norm{\bu_{n}}_{L^\infty(0,T;L^{2}(\Omega_{\eta_{n}}))}^2\Big) \abs{t-\sigma}^\frac12 
\\
&\leq C(m)\tau^\frac12.
\end{align*}
This implies (3), namely
\[
\Big|\fint_t^{t+\tau}\skp{g_n(t)-g_n(\sigma),f_{n,m}(t)}\, d\sigma\Big|\leq C(m)\tau^\frac12.
\]
This finishes the proof of the convergence of $(I)_n$ term. 

{\bf For the second term $(II_n)$} we again apply Theorem~\ref{thm:auba}.
Here we set $g_n=\bu_{n}\chi_{\Omega_{\eta_{n}}}$ and $f_n=(\bu_{n}-\testn(\partial_t \eta_{n}))\chi_{\Omega_{\eta_{n}}}$ 
We apply Theorem~\ref{thm:auba} with the following spaces $X={H^{-s}(Q^\kappa)}$ and consequently $X'={H^{s}(\Omega_L)}$ for some $s\in (0,\frac14)$. Further we define $Z=L^2(\Omega_L)$. Please observe that we may extend all involved quantities by zero to be functions over $\Omega_L$. Finally, we again set all Lebesgue exponents to two. Similarly as for the first term, again the main effort is the construction of the right mollification. Indeed the assumptions $(1)$ and $(4)$ follow by standard compactness arguments. In particular, for assumption $(1)$ it has been shown in \cite[Proposition 2.28]{LR14} that $g_n$ is uniformly bounded in $H^{s}(Q^\kappa)$ (if $s\leq \frac14$).
For (2) we use the fact that $f_n$ has a zero trace on $\partial\Omega_{\eta_{n}(t)}$. First, let $\delta>0$ be given. We take $n_\delta$ large enough and $\tau_\delta>0$ small enough, such that 
\begin{align}
\label{eq:domain}
\sup_{n\geq n_\delta}\sup_{\tau\in (t-\tau_\delta,t+\tau_\delta)\cap [0,T]}\norm{\eta(t,x)-\eta_{n}(\tau,x)}_\infty\leq \delta.
\end{align}
Second, we fix $0<s<s_0<\frac{1}{4}$ and $\epsilon>0$. By \cite[Lemma A.13]{LR14} there exits a $\sigma_\epsilon$ and a sequence $\tilde{f}_{n,\delta}$, such that $\supp(\tilde{f}_{n,\delta}(t))\subset \Omega_{\eta^{n}(t)-3\delta}$ for all $3\delta\leq \sigma_\epsilon$,
 that is divergence free and 
$\norm{f-\tilde{f}_{n,\delta}}_{(H^{-{s_0}}(\Omega_L))}\leq \epsilon\norm{f}_{L^2(\Omega_L)}$.  And $\norm{\tilde{f}_{n,\delta}}_{L^2(\Omega_L))}\leq c\norm{f}_{L^2(\Omega_L)}$.
We mollify this solenoidal function to define
\[
f_{n,\delta}= \tilde{f}_{n,\delta}*\psi_\delta\text{ where $\psi$ is the standard mollifier in space.}
\]
Then this definition implies by a standard convolution estimate that
\begin{align*}
\norm{f_n-f_{n,\delta}}_{H^{-s}(\Omega_L)}&\leq \norm{\tilde{f}_{n,\delta}-f_{n,\delta}}_{H^{-s}(Q^\kappa)} + \norm{\tilde{f}_{n,\delta}-f_n}_{H^{-s_0}(Q^\kappa)}
\\
&\leq c\delta^{s-s_0}\norm{\tilde{f}_{n,\delta}}_{H^{-s_0}(Q^\kappa)}+\epsilon\norm{f_n}_{L^2(Q^\kappa)}\leq c\epsilon \norm{f_n}_{L^2(\Omega_L)},
\end{align*}
for $\delta$ small enough in dependence of $s-s_0$.
Please observe that by the properties of the mollification and \eqref{eq:domain} that $\supp(f_{n,\delta}(t))\subset \Omega_{\eta_{n}(t)-2\delta}\subset \Omega_{\eta_{m}-\delta}(t+\tau)$ for $\tau\in [-\tau_\delta,\tau_\delta]$ and $m\geq n_\delta$.  We define $\mathcal{J}_n$ as the Piola transform from $\Omega\to \Omega_{\eta_{n}}$. Next we fix $m_0\geq n_\delta$ and find that $\mathcal{J}_{m}^{-1}f_{n,\delta}$ is compactly supported in $\Omega$ for all $m\geq 0$. Hence we can project it to $\{\hat{\bf Z}_k\}_{k\leq m}$, where we choose $m_0$ large enough, such that there is an $\hat{f}_{n,m}=\sum \hat{f}_{n,m}^k \hat{\bf Z}_k$ with 
\[
\norm{\hat{f}_{n,m}-\mathcal{J}_n^{-1}f_{n,\delta}}_{L^2_x}\leq \frac{\epsilon}{C(\delta)} \norm{\mathcal{J}_{n}^{-1}f_{n,\delta}}_{H^1_x}\leq c\epsilon \norm{f_{n}}_{L^2(\Omega_L)},
\]
as
\[
\norm{\mathcal{J}_{m_0}^{-1}f_{n,\delta}}_{H^1_x}\leq C(\norm{\eta_{n}}_{H^2_x\cap W^{1,\infty}_x})\norm{f_{n,\delta}}_{W^{1,\infty}_x}\leq C(\delta)\norm{f_{n}}_{L^2(\Omega_L)},
\]
where we used the stabilisation term implying $\eta_n L^\infty_t(H^3_x)$
Now finally 
\[
f_{n,m}:=\mathcal{J}_n\hat{f}_{n,m}
\]
is a suitable approximation of $f_n$, relying again on the regularity of $\eta$. Indeed (2) follows, by the continuity of the Piola transform \eqref{eq:piolacont}, as by Sobolev embedding (on $\Omega_L$), there is some $p<2$, such that
\begin{align*}
    \norm{f_{n,m}-f_n}_{H^{-s}(\Omega_L)}&\leq \norm{f_{n,m}-f_{n,\delta}}_{H^{-s}(\Omega_L)}+\norm{f_{n}-f_{n,\delta}}_{H^{-s}}
    \leq c\norm{f_{n,m}-f_{n,\delta}}_{L^p}+\norm{f_{n}-f_{n,\delta}}_{H^{-s}}
    \\
    &\leq c\norm{\mathcal{J}_{n}^{-1}f_{n,m}-\mathcal{J}_n^{-1}f_{n,\delta}}_{L^2}+\norm{f_{n}-f_{n,\delta}}_{H^{-s}}\leq c\epsilon \norm{f_n}_{L^2}=c\epsilon,
\end{align*}
as $f_n=(\bu_{n}-\testn(\partial_t \eta_{n}))\in L^\infty_t(L^2_x)$.
Moreover, by the regularity of $\eta_n$ and by its definition $\mathcal{J}_{\eta_n(\sigma)}f_{n,\delta}(t)$ can be used as a testfunction on the fluid equation alone for $\sigma\in[t,t+\tau]$ and $t\in [0,T-\tau]$. 
Hence (3) follows exactly along the lines of the above estimates using the weak formulation \eqref{eq:wfd}. Notice that here we will work just with the fluid equation since the traces of test function are zero at the moving interface. Indeed,
\begin{align*}
\abs{\skp{g_n(t)-g_n(\sigma),f_{n,m}(t)}}
&=\absBB{\int_{\Omega_L} \Big(\bu_{n}(t)\chi_{\Omega_{\eta_{n}(t)}}-\bu_{n}(\sigma)\chi_{\Omega_{\eta_{n}(\sigma)}}\Big)\cdot \mathcal{J}_{\eta_n(t)}\hat{f}_{n,m}(t)\, dx}
\\
&=\absBB{\int_{\Omega_L} \bu_{n}(t)\chi_{\Omega_{\eta_{n}(t)}}\cdot \mathcal{J}_{\eta_n(t)}\hat{f}_{n,m}(t)
-\bu_{n}(\sigma)\chi_{\Omega_{\eta_{n}(\sigma)}}\cdot \mathcal{J}_{\eta_n(\sigma)}\hat{f}_{n,m}(t)dx }
\\
&\quad +\absBB{\int_{\Omega_L} \bu_{n}(\sigma)\chi_{\Omega_{\eta_{n}(\sigma)}}\cdot \Big(\mathcal{J}_{\eta_n(t)}\hat{f}_{n,m}(t)-\mathcal{J}_{\eta_n(\sigma)}\hat{f}_{n,m}(t)\Big)dx }
\\
&=: (B_1)+(B_2)
\end{align*}
The term $(B_1)$ is estimated analogously to $(A_1)$ 
The estimate of $(B_2)$ uses the a-priori regularity of $\partial_t\eta_n\in L^2(H^3)$:
\begin{align*}
(B_2)&\leq \int \abs{\bu_{n}(\sigma)}\chi_{\Omega_{\eta_{n}(\sigma)}}\cdot \int_t^\sigma\abs{\partial_s \mathcal{J}_{\eta_n(s)}(\hat{f}_{n,m}(t))}\, ds \, dx 
\\
&\leq c(L,\norm{\nabla \eta_n}_\infty)\norm{\bu_{n}(\sigma)}_{L^2_x}\norm{\partial_t \nabla \eta_{n}}_{L^2_t(L^2_x)}\abs{\sigma-t}^\frac12\norm{\hat{f}_{n,m}(t))}_{L^\infty_x}
\\
&\leq C(m)\tau^\frac12,
\end{align*}
as $\norm{\hat{f}_{n,m}(t))}_{L^\infty_x}\leq c(m) \norm{\bu-\testn(\partial_t\eta_n)}_{L^2_x}\leq c$. This finishes the proof of the $L^2$ compactness.

\subsection{Proof of Theorem \ref{thm:main} and Theorem~\ref{thm:cauchy}}
First Theorem \ref{thm:main} follows precisely by the lines of the $\varepsilon\to 0$ limit proofed in \cite[Section 6]{MS22}.

For Theorem~\ref{thm:cauchy} builds on \eqref{eqn:galerkin}. One then performs directly the fixed point of Subsection~\ref{sec:fp2}. Here the standard energy estimate suffices for a $\delta_n$-independent a-priori estimate. This induces a convex, closed, compact set on which a coupled Galerkin solution can be found. The limit passage with the coupled Galerkin solution and the limit $\epsilon\to0$ is then exactly the same as in the time-periodic case. The solution can then be iteratively extended in time, by reformulating the respective barriers up to a time instant where a topological change of the domain is appearing.

\section*{Acknowledgments}
C. M. and S. S. thank the support of the ERC-CZ Grant LL2105 CONTACT of the Faculty of Mathematics and Physics of Charles University.

Moreover, C.M.  acknowledges for the support of the project GAUK No.  456120, of Charles University.
S.S. thanks for the support of  the University Centre UNCE/SCI/023 of Charles University and the VR Grant 2022-03862 of the Swedish Science Foundation.

\bibliographystyle{acm}
\bibliography{references}

%
%

\end{document}